\numberwithin{equation}{section}
\newtheorem{Theorem}{Theorem}[section]
\newtheorem{Corollary}[Theorem]{Corollary}
\newtheorem{Lemma}[Theorem]{Lemma}
{ \theoremstyle{definition}
\newtheorem{Definition}[Theorem]{Definition}

\newtheorem{Remark}[Theorem]{Remark} }
\newcommand{\ve}{\varepsilon}
\newcommand{\bb}{\mathbf{b}}
\newcommand{\R}{\mathbb{R}}
\newcommand{\cC}{\mathcal{C}}
\newcommand{\Sph}{\mathbb{S}}
\begin{document}

\allowdisplaybreaks

\newcommand{\arXivNumber}{1504.03921}

\renewcommand{\PaperNumber}{036}

\FirstPageHeading

\ShortArticleName{The Co-Points of Rays are Cut Points of Upper Level Sets for Busemann Functions}

\ArticleName{The Co-Points of Rays are Cut Points\\ of Upper Level Sets for Busemann Functions}

\Author{Sorin V.~SABAU}

\AuthorNameForHeading{S.V.~Sabau}

\Address{School of Science, Department of Mathematics, Tokai University, Sapporo 005--8600, Japan}
\Email{\href{mailto:sorin@tokai.ac.jp}{sorin@tokai.ac.jp}}

\ArticleDates{Received August 07, 2015, in f\/inal form April 06, 2016; Published online April 13, 2016}

\Abstract{We show that the co-rays to a ray in a complete non-compact Finsler manifold contain geodesic segments to upper level sets of Busemann functions. Moreover, we cha\-rac\-terise the co-point set to a ray as the cut locus of such level sets. The structure theorem of the co-point set on a surface, namely that is a local tree, and other properties follow immediately from the known results about the cut locus. We point out that some of our
f\/indings, in special the relation of co-point set to the upper lever sets, are new even for Riemannian manifolds.}

\Keywords{Finsler manifolds; ray; co-ray (asymptotic ray); cut locus; co-points; distance function; Busemann function}

\Classification{53C60; 53C22}

\section{Introduction}

Roughly speaking, a Busemann function is a function that measures the distance to a point at inf\/inity on a complete boundaryless non-compact Riemannian or Finsler manifold. Originally introduced by H.~Busemann for constructing a theory
of parallels for straight lines (see \cite{Bu1,In1,In2,Sh}), the function plays a fundamental role in the study of complete non-compact Riemannian or Finsler manifolds (see \cite{Oh,Sh,SST}, etc).

In the present paper, we study the dif\/ferentiability of the Busemann function in terms of co-rays and co-points to a ray in the general case of a forward complete non-compact Finsler manifold. We show that the notions of geodesic segments to a closed subset and the cut locus of such sets can be extremely useful in the study of co-rays and co-points to a ray, that is points where Busemann function is not dif\/ferentiable.

The originality of our research is two folded. Firstly, the detailed study of Busemann functions, co-rays and co-points on Finsler manifolds is new. Secondly, in the special case of Riemannian manifolds, our main Theorems~\ref{th1.4} and~\ref{th1.7}, f\/irst statement, are new and they lead to new elementary proofs of other results already known.

Let $(M,F)$ be a forward complete boundaryless Finsler manifold. A~unit speed globally minimising geodesic $\gamma\colon [0,\infty)\to
M$ is called a {\it $($forward$)$ ray}. A ray $\gamma$ is called
{\it maximal} if it is not a proper sub-ray of another ray, i.e.,
for any $\ve>0$ its extension to $[-\ve,\infty)$ is not a~ray
anymore. Moreover, let us assume that $(M,F)$ is bi-complete, i.e., forward and backward complete.
A Finslerian unit speed globally minimising geodesic $\gamma\colon \R\to
M$ is called a {\it straight line}. We point out that, even though for def\/ining rays and straight lines we not need any completeness hypothesis, without completeness, introducing rays and straight lines would be meaningless.

Let $\gamma\colon [0,\infty)\to M$ be a given forward ray and let
$x$ be a point on a non-compact forward complete
Finsler manifold $(M,F)$.
Then, a forward ray $\sigma\colon [0,\infty)\to M$
is called a
{\it forward co-ray $($or a forward asymptotic ray$)$ to $\gamma$} if
 there exists a sequence of minimal geodesics $\{\sigma_j\}$ from $q_j:=\sigma_j(0)$ to $\sigma_j(l_j):=\gamma(t_j)$, for some divergent sequence of numbers $\{t_j\}$, such that $\lim\limits_{j\to \infty}q_j=\sigma(0)$ and $\dot{\sigma}(0)=\lim\limits_{j\to \infty}\dot{\sigma}_j(0)$.

A co-ray to $\gamma$ is called {\it maximal} if
for any $\ve>0$ its extension to $[-\ve,\infty)$ is not an
co-ray to~$\gamma$ anymore.
The origin points of maximal co-rays of~$\gamma$ are
called the {\it co-points} to~$\gamma$ (a slightly stronger def\/inition can be found in~\cite{Oh}).

Similarly, one can def\/ine {\it asymptotic straight lines}. If $\gamma\colon \R\to M$ is a straight line in a bi-complete Finsler manifold, then the straight line $\sigma\colon \R\to M$ is an asymptotic straight line to $\gamma$ if $\sigma |_{[0,\infty)}$ is asymptotic ray to $\gamma |_{[0,\infty)}$, and $\bar{\sigma} |_{[0,\infty)}$ is asymptotic ray to $\bar{\gamma} |_{[0,\infty)}$ with respect to the {\it reverse Finsler metric} $\bar F(x,y):=F(x,-y)$, where $\bar{\sigma}(t):=\sigma(-t)$ and $\bar{\gamma}(t):=\gamma(-t)$, $t\in [0,\infty)$ are the reverse rays of $\sigma$ and $\gamma$, respectively (see \cite{Oh} for details). This def\/inition makes sense because if $\sigma$ is a geodesic for $F$, then the reverse curve $\bar\sigma(t)$ is geodesic for $\bar F$.

If $\gamma$ is a forward ray in the forward complete boundaryless non-compact Finsler manifold $(M,F)$, then the {\it Busemann function} is def\/ined by
$x\in M\mapsto \bb_\gamma(x):=\lim\limits_{t\to\infty}\{t-d(x,\gamma(t))\}$, where $d$ is the Finsler distance function (see Section~\ref{sec: Busemann functions} for details).

Let us recall from~\cite{TS} some notions that will be useful later.

Let $N\subset M$ be a closed subset of $M$. For a point $p\in M{\setminus} N$, a unit speed geodesic segment $\alpha\colon [0,a]\to M$ from $p=\alpha(0)$ is called a {\it forward $N$-segment}
if $d(\alpha(t),N)=a-t$ holds on $[0,a]$, where $d(x,N):=\inf\{d(x,q)\colon q\in M\}$ for any $x\in M$. The existence of $N$-segments from any $p\in M{\setminus} N$ follows from the fact that $N$ is closed and the forward completeness hypothesis. If a unit speed (nonconstant) geodesic segment $\alpha\colon [0,a]\to M$ is maximal as an $N$-segment, then the point $p=\alpha(0)$ is called a {\it cut point} of $N$ along the $N$-segment $\alpha$, i.e.,
any geodesic extension $\tilde \alpha\colon [-\ve,a]\to M$, $\varepsilon>0$, $\tilde{\alpha}|_{[0,a]}={\alpha}|_{[0,a]}$ of $\alpha$ is not a forward $N$-segment anymore. The {\it cut locus} of $N$, denoted hereafter $\mathcal C_N$, is the set of all cut points of $N$ along all nonconstant $N$-segments. Observe that $\mathcal C_N\cap N=\varnothing$.
If a point $p\in M{\setminus} N$ admits two $N$-segments of equal length, then~$p$ is a cut point of~$N$. Therefore, any interior point of $N$-segment is not a cut point of~$N$.

We point out that in~\cite{TS}, for a closed subset $N$ of a backward complete Finsler manifold $(M,F)$,
a backward $N$-segment is def\/ined analogously. The notions of forward and backward $N$-segments to a closed subset $N$ are equivalent. Indeed,
if we consider the {\it reverse Finsler metric} $\widetilde F$ on the manifold $M$ given by $\widetilde F(x,y):=F(x,-y)$ for each
$(x,y)\in TM$, a backward $N$-segment on $(M,F)$ is a forward $N$-segment on $(M,\widetilde F)$.

Notice that, since we consider only boundaryless manifolds, any geodesic segment on a compact interval admits forward and backward local geodesic extensions even if the manifold $M$ is not forward nor backward complete. For more basics on Finsler manifolds see~\cite{BCS} or~\cite{S}.

Here are the main results of our paper.
\begin{Theorem}\label{th1.4}
Let $(M,F)$ be a forward complete boundaryless Finsler manifold and let $\alpha\colon [0,a]$ $\to M$ be a unit-speed geodesic.
The following three statements are equivalent.
\begin{enumerate}\itemsep=0pt
\item[$1.$]
$\alpha$ is a subarc of a co-ray to $\gamma$.
\item[$2.$]
$\alpha$ satisfies
\begin{gather}\label{eq1}
\bb_\gamma(\alpha(s))=s+\bb_\gamma(\alpha(0))
\end{gather}
 for all $s\in[0,a]$.
\item[$3.$]
$\alpha$ is a forward $N^b_\gamma$-segment, where $N^b_\gamma:=\bb_\gamma^{-1}[b,\infty)$ and $b=\bb_\gamma(\alpha(a))$.
\end{enumerate}
\end{Theorem}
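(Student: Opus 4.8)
The plan is to prove $1\Rightarrow 2$ and $2\Rightarrow 1$ directly, and to deduce the equivalence $2\Leftrightarrow 3$ from a lemma that identifies the distance to the super-level set with the Busemann function. I first record the elementary facts about $\bb_\gamma$ on which everything rests. Writing $b_{\gamma,t}(x):=t-d(x,\gamma(t))$, the triangle inequality makes $t\mapsto b_{\gamma,t}(x)$ nondecreasing and bounded above by $d(\gamma(0),x)$, so $\bb_\gamma(x)$ exists; the same inequality gives $\bb_\gamma(y)-\bb_\gamma(x)\le d(x,y)$, so $\bb_\gamma$ is continuous and, by Dini's theorem, $b_{\gamma,t}\to\bb_\gamma$ uniformly on compacta. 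For $1\Rightarrow 2$ it suffices, by linearity in $s$, to treat the case where $\alpha$ is itself a co-ray $\sigma$. Let $\sigma_j$ be defining minimal geodesics from $q_j\to\sigma(0)$ to $\gamma(t_j)$ with $\dot\sigma_j(0)\to\dot\sigma(0)$; for fixed $s$ and $j$ large one has $\sigma_j(s)\to\sigma(s)$ by continuous dependence on initial data, and $b_{\gamma,t_j}(\sigma_j(s))=b_{\gamma,t_j}(q_j)+s$. Letting $j\to\infty$, using $\bb_\gamma\ge b_{\gamma,t_j}$ and the uniform convergence, gives $\bb_\gamma(\sigma(s))\ge\bb_\gamma(\sigma(0))+s$, and the reverse inequality is the $1$-Lipschitz bound.

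Next I would use the standard fact that a co-ray to $\gamma$ issues from every point of $M$ (obtained as a limit of minimal geodesics to $\gamma(t_j)$, $t_j\to\infty$) to prove the key lemma: $d(x,N^{c}_\gamma)=c-\bb_\gamma(x)$ whenever $\bb_\gamma(x)\le c$. The inequality $\ge$ is $\bb_\gamma(y)-\bb_\gamma(x)\le d(x,y)$ for $y\in N^{c}_\gamma$; for $\le$, follow a co-ray $\sigma$ from $x$: by $1\Rightarrow 2$, $\bb_\gamma(\sigma(s))=s+\bb_\gamma(x)$, so $\sigma(c-\bb_\gamma(x))\in N^{c}_\gamma$. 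Granting this, $2\Leftrightarrow 3$ is bookkeeping with $b=\bb_\gamma(\alpha(a))$: if \eqref{eq1} holds then $\bb_\gamma(\alpha(s))=s+b-a\le b$, hence $d(\alpha(s),N^{b}_\gamma)=b-\bb_\gamma(\alpha(s))=a-s$, i.e.\ $\alpha$ is a forward $N^{b}_\gamma$-segment; conversely an $N^{b}_\gamma$-segment has $\bb_\gamma(\alpha(s))\le b$ on $[0,a]$ with equality at $s=a$, so the lemma forces $\bb_\gamma(\alpha(s))=b-(a-s)$, which is \eqref{eq1}.

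For $2\Rightarrow 1$ the crucial observation is that \emph{a unit-speed broken geodesic along which $\bb_\gamma$ increases at unit rate is minimising, hence smooth}: for such a curve $\eta$ the $1$-Lipschitz property forces $d(\eta(0),\eta(S))\ge S=\mathrm{length}(\eta|_{[0,S]})$, and a corner would then contradict the strict triangle inequality in a convex neighbourhood. Concatenating $\alpha$ with a co-ray from $\alpha(a)$ and invoking this shows that $\alpha$ extends to a ray $\eta\colon[0,\infty)\to M$ with $\bb_\gamma(\eta(s))=s+\bb_\gamma(\eta(0))$; concatenating $\eta|_{[0,n]}$ with an arbitrary co-ray from $\eta(n)$ and invoking it again shows that for every $n>0$ the co-ray from $\eta(n)$ is \emph{unique} and equals $\eta|_{[n,\infty)}$. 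Hence the minimal geodesics from $\eta(n)$ to $\gamma(t)$ have initial velocities converging to $\dot\eta(n)$ as $t\to\infty$ (any subsequential limit generates a co-ray from $\eta(n)$, so must be $\dot\eta(n)$), and a diagonal choice $n_j\downarrow 0$, $t_j\uparrow\infty$ yields minimal geodesics from $\eta(n_j)\to\eta(0)$ to $\gamma(t_j)$ whose initial velocities converge to $\dot\eta(0)$. This exhibits $\eta$, and so its subarc $\alpha$, as a co-ray.

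I expect this last implication to be the main obstacle. Condition \eqref{eq1} a priori only supplies \emph{some} asymptotic ray in the prescribed direction, and the non-uniqueness of co-rays at co-points must be circumvented; the ``minimising broken geodesic is smooth'' principle, together with the uniqueness of the co-ray it yields at the interior points of $\eta$, is precisely what makes the diagonal construction legitimate. Everything else is either a standard property of $\bb_\gamma$ or routine metric bookkeeping.
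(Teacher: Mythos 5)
Your proposal is correct, but it closes the cycle along a genuinely different route from the paper. The paper proves $1\Rightarrow 2\Rightarrow 3\Rightarrow 1$: it cites Ohta's result for $1\Rightarrow 2$, gets $2\Rightarrow 3$ by exactly the two inequalities you use inside your distance lemma, and then proves $3\Rightarrow 1$ by taking a co-ray from an interior point $\alpha(\varepsilon)$, observing it is again an $N^b_\gamma$-segment, invoking the uniqueness of $N$-segments at interior points from the cut-locus theory of~\cite{TS} to force coincidence with $\alpha|_{[\varepsilon,a]}$, and finally passing to the limit co-ray $\lim_{\varepsilon\to 0}\sigma_\varepsilon$. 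You instead reprove Ohta's statement~2 from the defining sequence (monotonicity plus locally uniform convergence of $t-d(\cdot,\gamma(t))$), package $2\Leftrightarrow 3$ through the identity $d(x,N^c_\gamma)=c-\bb_\gamma(x)$ (which in the paper appears only afterwards as Corollary~\ref{cor1.6}, but your derivation uses only existence of co-rays and your $1\Rightarrow 2$, so there is no circularity), and prove $2\Rightarrow 1$ via the ``unit-rate increase of $\bb_\gamma$ implies minimising, hence smooth'' principle: this yields uniqueness of the co-ray at each interior point $\eta(n)$, so the minimal geodesics from $\eta(n)$ to $\gamma(t)$ have initial velocities converging to $\dot\eta(n)$, and your diagonal choice verifies the definition of co-ray at $\eta(0)$ directly. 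The trade-off: the paper's argument is shorter given~\cite{TS} and Ohta as black boxes and directly displays the cut-locus mechanism that is its theme, while yours is self-contained (no $N$-segment uniqueness needed) and, helpfully, makes explicit the ``limit of co-rays is a co-ray'' step that the paper leaves implicit in writing $\lim_{\varepsilon\to 0}\sigma_\varepsilon$; your concatenation argument is also essentially the paper's later Corollary~\ref{cor1.9} in disguise, here used as an ingredient rather than a consequence.
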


From here the relation between co-points to a forward ray and the cut points of a level set of Busemann function naturally follows.

\begin{Theorem}\label{th1.7}
Let $(M,F)$ be a forward complete boundaryless Finsler manifold, and $\gamma$ a ray in~$M$.
\begin{enumerate}\itemsep=0pt
\item[$1.$]
For every $b\in \R$, the set of co-points of $\gamma$ in the sub-level $\bb_\gamma^{-1}(-\infty,b)$ is exactly the cut locus of the subset $N^b_\gamma$, i.e., $\mathcal C_\gamma\cap \bb_\gamma^{-1}(-\infty,b)={\mathcal C}_{N_\gamma ^b}$.
Moreover,
${\mathcal C}_{N_\gamma ^b}\subset {\mathcal C}_{N_\gamma ^c}$, for any $b<c$.

\item[$2.$] The Busemann function $\bb_\gamma$ is differentiable at a point $x$ of $M$ if and only if
$x$ admits a unique co-ray $\sigma$ to $\gamma$ emanating from $x=\sigma(0)$. In this case $\nabla\bb_\gamma(x)=\dot{\sigma}(0)$.
\end{enumerate}
\end{Theorem}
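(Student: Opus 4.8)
The plan is to deduce both statements from Theorem~\ref{th1.4} and the properties of the cut locus recalled above. Fix $x\in M$, pick any $b>\bb_\gamma(x)$, and set $s_0:=b-\bb_\gamma(x)>0$; then $x\notin N^b_\gamma$ while $N^b_\gamma\ne\varnothing$ (it contains $\gamma(t)$ for $t\ge b$, since $\bb_\gamma(\gamma(t))=t$). The key preliminary is a dictionary between the co-rays of $\gamma$ issuing from $x$ and the $N^b_\gamma$-segments issuing from $x$. As $\bb_\gamma$ is $1$-Lipschitz --- indeed $\bb_\gamma(q)-\bb_\gamma(y)=\lim_{t\to\infty}\{d(y,\gamma(t))-d(q,\gamma(t))\}\le d(y,q)$ --- every $q\in N^b_\gamma$ obeys $d(y,q)\ge b-\bb_\gamma(y)$, hence $d(y,N^b_\gamma)\ge b-\bb_\gamma(y)$; and an $N^b_\gamma$-segment $\alpha$ from a point $y\in\{\bb_\gamma<b\}$, whose length equals $d(y,N^b_\gamma)$, satisfies $\bb_\gamma(\alpha(s))=s+\bb_\gamma(y)$ by the implication $(3)\Rightarrow(2)$ of Theorem~\ref{th1.4} (it is in particular an $N^{\bb_\gamma(\alpha(a))}_\gamma$-segment), while its interior points avoid the closed set $N^b_\gamma$, which forces that length to be $\le b-\bb_\gamma(y)$. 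Thus $d(\,\cdot\,,N^b_\gamma)=b-\bb_\gamma$ on the open set $\{\bb_\gamma<b\}\ni x$; in particular every $N^b_\gamma$-segment from $x$ has length $s_0$ and, by $(3)\Rightarrow(1)$ of Theorem~\ref{th1.4} together with the routine fact that a tail of a co-ray is again a co-ray, is the initial arc of a unique co-ray from $x$. Conversely, by $(1)\Rightarrow(3)$ every co-ray $\sigma$ from $x$ restricts to the $N^b_\gamma$-segment $\sigma|_{[0,s_0]}$, and the two assignments are mutually inverse: the co-rays from $x$ are in bijection with the $N^b_\gamma$-segments from $x$.

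For Part~1 I would then check that this bijection carries maximal co-rays to maximal $N^b_\gamma$-segments. The point is that, for $\ve>0$, the backward geodesic extension of a co-ray $\sigma$ from $x$ to $[-\ve,\infty)$ is again a co-ray if and only if the backward extension of $\sigma|_{[0,s_0]}$ by length $\ve$ is again an $N^b_\gamma$-segment: in one direction one applies $(1)\Rightarrow(3)$ of Theorem~\ref{th1.4} to the extended co-ray, whose $\bb_\gamma$-value at the far endpoint is still $b$ by linearity of $\bb_\gamma$ along co-rays; in the other, the extended $N^b_\gamma$-segment is by $(3)\Rightarrow(1)$ a subarc of a co-ray, a suitable tail of which extends $\sigma$ to the left. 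Consequently $x$ is a co-point of $\gamma$ exactly when some $N^b_\gamma$-segment from $x$ is maximal, i.e.\ exactly when $x\in\mathcal C_{N^b_\gamma}$; since $\mathcal C_{N^b_\gamma}\cap N^b_\gamma=\varnothing$, that is $\mathcal C_{N^b_\gamma}\subset\{\bb_\gamma<b\}$, and $b>\bb_\gamma(x)$ was arbitrary, we get $\mathcal C_\gamma\cap\bb_\gamma^{-1}(-\infty,b)=\mathcal C_{N^b_\gamma}$ for every $b\in\R$. The inclusion $\mathcal C_{N^b_\gamma}\subset\mathcal C_{N^c_\gamma}$ for $b<c$ then follows at once from $\bb_\gamma^{-1}(-\infty,b)\subset\bb_\gamma^{-1}(-\infty,c)$.

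For Part~2, one direction is soft. If $\bb_\gamma$ is differentiable at $x$ and $\sigma$ is any co-ray from $x$, then $\bb_\gamma(\sigma(s))=s+\bb_\gamma(x)$ shows that the differential of $\bb_\gamma$ at $x$ takes the value $1=F(x,\dot\sigma(0))$ on the unit vector $\dot\sigma(0)$, while the $1$-Lipschitz property of $\bb_\gamma$ bounds that differential above by $F(x,\,\cdot\,)$; by strict convexity of the Finsler norm a nonzero covector attains its dual norm at exactly one unit vector, so $\dot\sigma(0)$, and hence $\sigma$, is unique, and $\nabla\bb_\gamma(x)=\dot\sigma(0)$ since that unit vector is by definition the Legendre transform of the differential of $\bb_\gamma$ at $x$. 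For the converse I would use the identity $\bb_\gamma=b-d(\,\cdot\,,N^b_\gamma)$ near $x$ established in the first paragraph, together with the dictionary: a unique co-ray from $x$ means a unique $N^b_\gamma$-segment from $x$, and the classical fact (see~\cite{TS}) that a distance function to a closed set is differentiable at any point admitting a single minimizing segment then gives differentiability of $\bb_\gamma$ at $x$.

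I expect the principal obstacle to be the maximality-transfer step in Part~1: one must make sure that, under the bijection, extending a co-ray infinitesimally ``to the left'' really is equivalent to extending the associated $N^b_\gamma$-segment, ruling out pathologies such as a backward extension that is only locally minimizing, or a tail of a co-ray that fails to inherit the co-ray property. Once the dictionary and its compatibility with backward extension are in place, the remainder is bookkeeping plus two external inputs: the $1$-Lipschitz and strict-convexity facts used above, and the differentiability criterion for distance functions to closed sets.
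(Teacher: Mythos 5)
Your proposal is correct and follows essentially the paper's route: Theorem~\ref{th1.4} is used as a dictionary between co-rays from $x$ and $N^b_\gamma$-segments from $x$, maximality under backward extension is transferred through this dictionary to get $\mathcal C_\gamma\cap \bb_\gamma^{-1}(-\infty,b)={\mathcal C}_{N_\gamma^b}$, and part~2 is deduced from the identity $d(\cdot,N^b_\gamma)=b-\bb_\gamma$ (Corollary~\ref{cor1.6}) together with the differentiability criterion for distance functions to closed sets (Theorem~A of \cite{TS}). The only deviations are cosmetic: you spell out the maximality-transfer step that the paper merely asserts, you obtain ${\mathcal C}_{N_\gamma^b}\subset{\mathcal C}_{N_\gamma^c}$ as an immediate corollary of the equality instead of the paper's separate argument, and you prove the ``differentiable $\Rightarrow$ unique co-ray'' direction directly (it is statement~3 of Theorem~\ref{thm:Busemann functions properties by Ohta}, which the paper cites).
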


Loosely speaking, $\mathcal C_\gamma=\bigcup_{b}{\cal C}_{N_\gamma^b}$, where ${\mathcal C}_{N_\gamma ^b}$ denotes the cut locus of $N^b_\gamma$. Here ``loosely" means that it is possible that the local part of ${\cal C}_{N_\gamma^b}$ near a f\/ixed point $x$ keeps changing and never stabilises as $b$ goes to $+\infty$.

These two main theorems make possible to apply known results about the $N$-segments and cut points of a closed subset of $(M,F)$ to the study of co-rays and co-points, allowing the use of our previous results from \cite{TS}.

Seen in this light, the proof of the structure theorem for the co-point set on a Finsler surface, namely that is a local tree, becomes trivial.
It is also clear that the topology of
$(\mathcal C_\gamma,\delta)$, with the induced metric, coincides with the topology of the Finsler surface, as well as that $(\mathcal C_\gamma,\delta)$
is forward complete (see Theorem~\ref{th1.11}). Other results are also straightforward from~\cite{TS} (see Theorem~\ref{th1.12}).

Section~\ref{sec: diff of Bus} contains some consequences of the characterisation of the Busemann function's dif\/ferentiability given above. Here we study the conditions for the set $\bb^{-1}_\gamma(-\infty,c]$ to be compact (Theorem~\ref{th2.2}), and
for $\bb_\gamma$ to be an exhaustion (Corollary~\ref{exhaust}). If the co-point set $\mathcal C_\gamma$ contains an isolated point, then some important consequences are proved in Theorem~\ref{MTF2}.

\section{Busemann functions}\label{sec: Busemann functions}

Let $(M,F)$ be a forward complete boundaryless non-compact Finsler manifold (see \cite{BCS,S} for details on the completeness of Finsler manifolds).
In Riemannian geometry, the forward and backward completeness are equivalent, hence the words ``forward'' and ``backward'' are superf\/luous, but in Finsler geometry these are not equivalent anymore.

\begin{Definition}\label{def:Bus_fct}
If $\gamma\colon [0,\infty)\to M$ is a ray in a forward complete boundaryless non-compact Finsler manifold $(M,F)$, then the function
\begin{gather}\label{Bus_fnc_def}
\bb_\gamma\colon  \ M\to\R,\qquad
\bb_\gamma(x):=\lim_{t\to\infty}\{t-d(x,\gamma(t))\}
\end{gather}
is called {\it the Busemann function with respect to
$\gamma$}, where~$d$ is the Finsler distance function.
\end{Definition}

The Busemann function for Finsler manifolds
was introduced and partially studied by Eg\-lof\/f~\cite{Eg} and
more recently by~\cite{Oh}.

\begin{Remark}\quad
\begin{enumerate}\itemsep=0pt
\item The limit in~\eqref{Bus_fnc_def} always exists because the
function
$t\mapsto t-d(x,\gamma(t))$
is monotone nondecreasing and bounded above by $d(\gamma(0),x)$.

\item
Obviously $\bb_\gamma(\gamma(t))=t$, for all $t\geq 0$.
Moreover, if $\gamma_0$ is a sub-ray of the ray $\gamma$, then $\bb_{\gamma_0}(x)=\bb_\gamma(x)-t_0$ for any point $x\in M$, where $t_0\geq 0$ is the parameter value on $\gamma$ such that $\gamma_0(0)=\gamma(t_0)$.
\end{enumerate}
\end{Remark}

It follows that
a point $x$ of $M$ is an element of $\bb_\gamma^{-1}(a,\infty)$, for some real number $a$,
if and only if $t-d(x,\gamma(t))>a$ for some $t>0$, and
hence we get
\begin{Lemma}\label{lem1.1}
For each $a\in \R$,
$\bb^{-1}_\gamma(a,\infty)=\bigcup_{t>0}B_t^-(\gamma(t+a))$
holds, where $B_t^-(\gamma(t+a)):=\{x\in M \ |\ d(x,\gamma(t+a))<t \}$ denotes the backward open ball centred at $\gamma(t+a)$ of radius $t$.
In particular $\bb_\gamma^{-1}(a,\infty)$ is arcwise connected for each $a\geq 0$.
\end{Lemma}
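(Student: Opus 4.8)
The plan is to unwind the definition of $\bb_\gamma$ as a pointwise supremum. First I would observe that, by the first remark, the map $t\mapsto t-d(x,\gamma(t))$ is monotone nondecreasing, so its limit equals its supremum over $t>0$; hence $\bb_\gamma(x)>a$ holds if and only if there exists some $t>0$ with $t-d(x,\gamma(t))>a$. It is cleaner to reparametrise: writing $s=t+a$ (which is legitimate for $s$ large, and the tail is all that matters since the sequence in $t$ is divergent), the condition $t-d(x,\gamma(t))>a$ becomes $d(x,\gamma(s))<s-a$, i.e. $x\in B^-_{s-a}(\gamma(s))$. Actually, to match the statement exactly I would keep the author's parametrisation: $x\in\bb_\gamma^{-1}(a,\infty)$ iff $t-d(x,\gamma(t+a)) \ge \text{(something)}$ — more precisely, I would argue that $t-d(x,\gamma(t+a))$ is again monotone nondecreasing in $t$ (same argument via the triangle inequality along $\gamma$), its limit is $\bb_\gamma(x)-a$, and therefore $\bb_\gamma(x)>a$ iff $t-d(x,\gamma(t+a))>0$ for some $t>0$, which is precisely $d(x,\gamma(t+a))<t$, i.e. $x\in B^-_t(\gamma(t+a))$. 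Taking the union over $t>0$ gives the displayed equality.

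Next, for the arcwise connectedness when $a\ge 0$: the key point is that $\gamma(t+a)$ is a single point that lies in $B^-_t(\gamma(t+a))$ as soon as $t>0$ (trivially, since $d(\gamma(t+a),\gamma(t+a))=0<t$), and moreover for $a\ge 0$ the basepoint region is nested — I would show that all these backward balls share the single point $\gamma(a)$, or better, that each pair is joined through $\gamma$. Concretely, for any $x\in B^-_t(\gamma(t+a))$ there is a minimal geodesic from $x$ to $\gamma(t+a)$; I would check this whole geodesic lies in $\bb_\gamma^{-1}(a,\infty)$: if $y$ is on that geodesic then $d(y,\gamma(t+a))\le d(x,\gamma(t+a))<t$, so $y\in B^-_t(\gamma(t+a))\subset\bb_\gamma^{-1}(a,\infty)$. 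Thus each backward ball $B^-_t(\gamma(t+a))$ is itself arcwise connected (every point connects to the center $\gamma(t+a)$ by a minimizing geodesic inside the ball), and since the balls are increasing in $t$ (as $d(x,\gamma(t'+a)) < t' $ whenever $d(x,\gamma(t+a))<t$ and $t'>t$, using $d(\gamma(t+a),\gamma(t'+a)) = t'-t$ because $\gamma$ is a ray and $a\ge 0$ so that $t+a, t'+a\ge 0$), their union is a nested union of arcwise connected sets, hence arcwise connected.

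The main obstacle is the nesting/monotonicity claim for the backward balls when $a\ge 0$: one needs $d(x,\gamma(t+a))<t \Rightarrow d(x,\gamma(t'+a))<t'$ for $t'>t$, and this relies on $d(\gamma(t+a),\gamma(t'+a))=t'-t$, which is exactly where forward-mimimality of $\gamma$ on $[a,\infty)$ (equivalently $a\ge 0$) is used — the triangle inequality $d(x,\gamma(t'+a))\le d(x,\gamma(t+a))+d(\gamma(t+a),\gamma(t'+a))<t+(t'-t)=t'$ then closes the argument. The asymmetry of the Finsler distance requires a little care here: one must take the distance in the correct order along $\gamma$ (from $\gamma(t+a)$ to $\gamma(t'+a)$), but since $\gamma$ is a forward unit-speed minimizing geodesic this is precisely $t'-t$, so no genuine difficulty arises. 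Everything else is a direct translation of the definition of $\bb_\gamma$ through the monotone-limit remark.
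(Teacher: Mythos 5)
Your argument is correct and follows essentially the same route as the paper, whose ``proof'' is just the reparametrisation of the monotone-limit characterisation of $\bb_\gamma$ stated immediately before the lemma, with the connectedness obtained (implicitly) exactly as you do: each backward ball is arcwise connected through its centre via minimizing geodesics, and for $t<t'$ the balls nest because $d(x,\gamma(t'+a))\le d(x,\gamma(t+a))+d(\gamma(t+a),\gamma(t'+a))<t+(t'-t)=t'$. One passing remark should be dropped: the balls do \emph{not} all share the point $\gamma(a)$, since $d(\gamma(a),\gamma(t+a))=t$ places it on the boundary of the open ball $B_t^-(\gamma(t+a))$ rather than inside it --- but the nesting argument you actually carry out is the correct one and does not rely on that remark.
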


The triangle inequality implies
\begin{Lemma}\label{lem: Lispchitzness}
 The function $\bb_\gamma$ is locally Lipschitz, i.e.,
\begin{gather*}
-d(x,y)\leq \bb_\gamma(x)-\bb_\gamma(y)\leq d(y,x)
\end{gather*}
for any two points $x,y\in M$.
\end{Lemma}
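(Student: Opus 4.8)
The plan is to reduce everything to the triangle inequality applied inside the defining limit. Fix $x,y\in M$. By the first item of the Remark following Definition~\ref{def:Bus_fct}, both limits $\bb_\gamma(x)=\lim_{t\to\infty}\{t-d(x,\gamma(t))\}$ and $\bb_\gamma(y)=\lim_{t\to\infty}\{t-d(y,\gamma(t))\}$ exist as finite real numbers. Hence their difference may be computed termwise, the parameter $t$ cancelling:
\[
\bb_\gamma(x)-\bb_\gamma(y)=\lim_{t\to\infty}\bigl(d(y,\gamma(t))-d(x,\gamma(t))\bigr).
\]

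Next I would estimate the bracketed quantity uniformly in $t$. Applying the (possibly asymmetric) triangle inequality in the form $d(y,\gamma(t))\le d(y,x)+d(x,\gamma(t))$ gives $d(y,\gamma(t))-d(x,\gamma(t))\le d(y,x)$; applying it in the other order, $d(x,\gamma(t))\le d(x,y)+d(y,\gamma(t))$, gives $d(y,\gamma(t))-d(x,\gamma(t))\ge -d(x,y)$. Thus for every $t>0$,
\[
-d(x,y)\le d(y,\gamma(t))-d(x,\gamma(t))\le d(y,x).
\]
Passing to the limit $t\to\infty$ yields $-d(x,y)\le \bb_\gamma(x)-\bb_\gamma(y)\le d(y,x)$, which is precisely the asserted two-sided bound.

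Local Lipschitzness is then immediate from this estimate: on a forward and backward bounded neighbourhood of any point of $M$ both $d(x,y)$ and $d(y,x)$ are controlled by a constant multiple of a symmetric reference distance, so $\bb_\gamma$ is Lipschitz there. There is essentially no obstacle in this argument; the only point requiring a moment's care is bookkeeping the order of the arguments of $d$, since the Finsler distance need not be symmetric, so that the correct one-sided bounds $-d(x,y)$ and $d(y,x)$—rather than their reverses—appear on the two sides.
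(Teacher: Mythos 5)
Your argument is correct and is exactly the one the paper intends: the paper offers no written proof beyond the remark ``The triangle inequality implies,'' and your termwise estimate $-d(x,y)\le d(y,\gamma(t))-d(x,\gamma(t))\le d(y,x)$ followed by passage to the limit is precisely that argument, with the asymmetry of $d$ handled correctly.
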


The dif\/ferentiability of Busemann function is fundamental for
the study of co-rays. Some results are already known (see for instance~\cite{In1}). Let us denote by $\nabla f(x)$ the Finslerian gradient of a smooth function $f\colon M\to \R$ (see~\cite{Oh} or~\cite[p.~41]{S}).

\begin{Theorem}[\cite{Oh}]\label{thm:Busemann functions properties by Ohta}
Let $\gamma$ be a forward ray in a non-compact forward complete
Finsler mani\-fold~$(M,F)$.
\begin{enumerate}\itemsep=0pt
\item[$1.$] For any $x\in M$, there exists at least one co-ray $\sigma$
of $\gamma$ such that $\sigma(0)=x$.
\item[$2.$] If the geodesic ray $\sigma$ is a co-ray to $\gamma$, then
$\bb_\gamma(\sigma(s))=s+\bb_\gamma(\sigma(0))$, $\forall\,
s\geq 0$.

\item[$3.$] If $\bb_\gamma$ is~differentiable at a point $x\in M$,
then $\sigma(s):=\exp_x(s\nabla \bb_\gamma(x))$ is the unique
co-ray to $\gamma$ emanating from $x$, where $\nabla \bb_\gamma(x)$ is the Finslerian gradient of
$\bb_\gamma$ at~$x$.
\end{enumerate}
\end{Theorem}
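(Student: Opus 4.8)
The plan is to prove the three parts in turn, using only the monotonicity of $t\mapsto t-d(x,\gamma(t))$ recorded in the Remark above and the Lipschitz estimate of Lemma~\ref{lem: Lispchitzness} (and in particular avoiding Theorems~\ref{th1.4} and~\ref{th1.7}, whose proofs rest on this statement). For part~$1$ I would run a normal-family argument. Fix $x\in M$ and a divergent sequence $t_j\to\infty$; forward completeness yields, for each $j$, a unit-speed minimal geodesic $\sigma_j\colon[0,l_j]\to M$ from $x=\sigma_j(0)$ to $\gamma(t_j)$, with $l_j=d(x,\gamma(t_j))\to\infty$. The initial velocities $\dot\sigma_j(0)$ lie in the compact indicatrix $\Sph_x M=\{v\in T_xM : F(x,v)=1\}$, so after passing to a subsequence $\dot\sigma_j(0)\to v$. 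Put $\sigma(s):=\exp_x(sv)$ and take $q_j:=x$ in the definition of a co-ray. It remains to check that $\sigma$ is a ray: since the $\sigma_j$ are globally minimising with $l_j\to\infty$ and converge to $\sigma$ in $C^1$ on compact intervals, for $s_1<s_2$ we get $d(\sigma(s_1),\sigma(s_2))=\lim_j d(\sigma_j(s_1),\sigma_j(s_2))=s_2-s_1$, so $\sigma$ is globally minimising and hence a co-ray to $\gamma$ from $x$.

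For part~$2$, let $\sigma$ be a co-ray with data $\sigma_j$, $q_j\to\sigma(0)$, $t_j\to\infty$, $\dot\sigma_j(0)\to\dot\sigma(0)$. Lemma~\ref{lem: Lispchitzness} gives the upper bound $\bb_\gamma(\sigma(s))-\bb_\gamma(\sigma(0))\le d(\sigma(0),\sigma(s))=s$. For the reverse inequality I use that $\sigma_j$ is minimal, so $d(\sigma_j(s),\gamma(t_j))=l_j-s$ and therefore
\begin{gather*}
t_j-d(\sigma_j(s),\gamma(t_j))=\big(t_j-d(q_j,\gamma(t_j))\big)+s.
\end{gather*}
Since every partial expression $t-d(\cdot,\gamma(t))$ is a lower bound for the corresponding Busemann value (by the monotonicity in the Remark), this forces $\bb_\gamma(\sigma_j(s))\ge\big(t_j-d(q_j,\gamma(t_j))\big)+s$. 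The delicate point is to show $t_j-d(q_j,\gamma(t_j))\to\bb_\gamma(\sigma(0))$ even though $q_j$ moves: monotonicity gives, for each fixed $T$ and all large $j$, $t_j-d(q_j,\gamma(t_j))\ge T-d(q_j,\gamma(T))\to T-d(\sigma(0),\gamma(T))$, so $\liminf_j\big(t_j-d(q_j,\gamma(t_j))\big)\ge\bb_\gamma(\sigma(0))$, while the same expressions are $\le\bb_\gamma(q_j)\to\bb_\gamma(\sigma(0))$. As $q_j\to\sigma(0)$ and $\dot\sigma_j(0)\to\dot\sigma(0)$ give $\sigma_j(s)\to\sigma(s)$, continuity of $\bb_\gamma$ yields $\bb_\gamma(\sigma(s))\ge s+\bb_\gamma(\sigma(0))$, matching the upper bound.

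For part~$3$, assume $\bb_\gamma$ is differentiable at $x$. Part~$1$ supplies a co-ray $\sigma$ from $x$, and part~$2$ shows $s\mapsto\bb_\gamma(\sigma(s))$ has derivative $1$ at $s=0$, i.e.\ $(d\bb_\gamma)_x(\dot\sigma(0))=1$. Lemma~\ref{lem: Lispchitzness} bounds the dual Finsler norm of $(d\bb_\gamma)_x$ by $1$, and the unit vector $\dot\sigma(0)$ attaining the value $1$ shows this dual norm equals $1$. Strict convexity of the indicatrix and the Legendre duality defining the gradient then identify $\dot\sigma(0)$ as the unique unit vector $v$ with $(d\bb_\gamma)_x(v)=1$, namely $v=\nabla\bb_\gamma(x)$; thus $\sigma(s)=\exp_x(s\nabla\bb_\gamma(x))$. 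Since this argument pins down the initial velocity of every co-ray from $x$, and a geodesic is determined by its initial velocity, the co-ray is unique. The main obstacle is the double-limit interchange in part~$2$, where the monotonicity of the defining sequence is essential; the strict-convexity input in part~$3$ is the other place where the genuinely Finslerian (not merely Lipschitz) structure is used.
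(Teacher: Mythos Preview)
Your argument is correct in all three parts, but there is nothing to compare it against: the paper does not give its own proof of this theorem. It is stated with the attribution \cite{Oh} and is used as an input (in particular, item~2 is invoked in the proof of Theorem~\ref{th1.4}, $1\Rightarrow 2$). So the relevant benchmark is Ohta's original argument, not anything in this paper.

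That said, a couple of remarks on your write-up. In part~1, the passage from $\dot\sigma_j(0)\to v$ to $C^1$ convergence on compact intervals is exactly smooth dependence of solutions of the geodesic equation on initial data; it is worth saying so explicitly, since this is what lets you conclude $d(\sigma(s_1),\sigma(s_2))=s_2-s_1$. In part~2, your handling of the moving basepoints $q_j$ via the sandwich $T-d(q_j,\gamma(T))\le t_j-d(q_j,\gamma(t_j))\le \bb_\gamma(q_j)$ is clean and is indeed the point where monotonicity is indispensable. In part~3, your identification of $\dot\sigma(0)$ with $\nabla\bb_\gamma(x)$ via the equality case $F^*((d\bb_\gamma)_x)=1$ and strict convexity of the indicatrix is the standard Legendre-duality argument and is exactly how the Finslerian gradient is characterised in \cite{Oh,S}; you might add the one-line computation $(d\bb_\gamma)_x(\nabla\bb_\gamma(x))=F(\nabla\bb_\gamma(x))^2$ to make the identification airtight.

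In short: your proof is self-contained and sound, and it is essentially the expected argument; the paper itself simply quotes the result.
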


\begin{Remark}
The converse of statement~2 in Theorem~\ref{thm:Busemann functions properties by Ohta} is actually contained in our Theorem~\ref{th1.4}, 2~$\Rightarrow$~1.
\end{Remark}

For any closed subset $N$ of $M$, we have def\/ined $N$-segments in Introduction.
From now on, {\it any $N^b_\gamma$-segment will mean forward $N^b_\gamma$-segment}, where
$N^b_\gamma:=\bb^{-1}_\gamma [b,\infty)$.

\begin{proof}[Proof of Theorem~\ref{th1.4}]
$1\Rightarrow 2$.
Suppose that the property 1 holds.
Then, statement 2 follows immediately from Theorem~\ref{thm:Busemann functions properties by
Ohta}(2).

$2\Rightarrow 3$.
Choose any $s\in[0,a]$ and any $x\in \bb_\gamma^{-1}[b,\infty)$, where $b:=\bb_\gamma(\alpha(a))$. By def\/inition we have $\bb_\gamma(x)\geq b=\bb_\gamma(\alpha(a))$, and
from Lemma~\ref{lem: Lispchitzness}
it follows
\begin{gather}
\label{eq2}
\bb_\gamma(\alpha(a))-\bb_\gamma(\alpha(s))\leq \bb_\gamma(x)-\bb_\gamma(\alpha(s))\leq
d(\alpha(s),x).
\end{gather}

On the other hand, the relation~\eqref{eq1} implies
\begin{gather}\label{eq3}
d(\alpha(s),\alpha(a))\leq a-s=\bb_\gamma(\alpha(a))-\bb_\gamma(\alpha(s)).
\end{gather}

From relations \eqref{eq2} and \eqref{eq3}
it results
$d(\alpha(s),\alpha(a))=d(\alpha(s),N^b_\gamma)$
for any $s\in[0,a]$, and
since the point $x$ is arbitrarily chosen from $N^b_\gamma$
we obtain that $\alpha$ is an $N^b_\gamma$-segment.

$3\Rightarrow 1$.
Choose any suf\/f\/iciently small $\varepsilon>0$.
Let $\sigma_\varepsilon \colon [\varepsilon,\infty)\to M$ denote a co-ray to $\gamma$ emanating from $\alpha(\varepsilon)$,
$\alpha|_{(\varepsilon,\infty)}\neq\sigma_\varepsilon|_{(\varepsilon,\infty)}$.

Since $\sigma_\varepsilon$ satisf\/ies \eqref{eq1} for all $s\geq \varepsilon$,
$\sigma |_{[\varepsilon,a]} $ is also an $N^b_\gamma$-segment emanating from $\alpha(\varepsilon)$.
Thus, the two geodesic segments
$\alpha|_{[\varepsilon,a]}$ and $\sigma_\varepsilon|_{[\varepsilon,a]}$ must coincide,
since $\alpha(\varepsilon)$ is an interior point of $\alpha$ and interior points of $N$-segments have a unique $N$-segment.
Therefore, $\alpha$ is a subarc of the co-ray $\lim\limits_{\varepsilon\to 0}\sigma_\varepsilon$.
\end{proof}

By Theorem~\ref{th1.4} we get
\begin{Corollary}\label{cor1.5}
If a unit speed geodesic $\sigma\colon [0,a]\to M$
satisfies relation \eqref{eq1}, for all $s\in [0,a]$, then $\sigma$ is a co-ray to $\gamma$.
\end{Corollary}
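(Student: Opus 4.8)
The plan is to obtain this immediately from Theorem~\ref{th1.4}. Saying that the unit speed geodesic $\sigma\colon[0,a]\to M$ satisfies \eqref{eq1} for all $s\in[0,a]$ is precisely statement~$2$ of Theorem~\ref{th1.4} with $\alpha$ replaced by $\sigma$; so the equivalence established there, in particular the implication $2\Rightarrow 1$, already gives that $\sigma$ is a subarc of a co-ray to $\gamma$, which is the assertion. In other words, I would simply cite the $2\Rightarrow 1$ direction of Theorem~\ref{th1.4}.

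The one point I would spell out is the reading of the conclusion. Since a co-ray is by definition parametrised on $[0,\infty)$, a geodesic on a compact interval cannot literally be a co-ray; the statement is to be understood as ``$\sigma$ is the initial arc of a co-ray to $\gamma$ emanating from $\sigma(0)$'', which is exactly what statement~$1$ of Theorem~\ref{th1.4} provides and is consistent with the loose usage already in force there. If one wants the sharper form that a chosen geodesic extension of $\sigma$ to $[0,\infty)$ is itself a co-ray, one may add the observation, immediate from the definition by shifting the approximating sequence, that a sub-ray of a co-ray to $\gamma$ is again a co-ray to $\gamma$; combined with the uniqueness of co-rays at interior points of $N^b_\gamma$-segments used in the proof of $3\Rightarrow 1$, this pins down the extension.

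I expect no real obstacle: all the content sits in Theorem~\ref{th1.4}, and this corollary merely isolates its $2\Rightarrow 1$ direction in the form that is convenient to invoke later, namely recognising co-rays from the level-set identity \eqref{eq1} alone, without having to verify the $N^b_\gamma$-segment condition by hand.
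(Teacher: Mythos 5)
Your proposal is correct and matches the paper exactly: the corollary is stated there with no separate argument beyond ``By Theorem~\ref{th1.4} we get'', i.e., it is precisely the $2\Rightarrow 1$ implication you invoke. Your remark on reading ``co-ray'' as ``initial arc of a co-ray'' is a reasonable clarification but adds nothing beyond the paper's intended meaning.
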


\begin{Corollary}\label{cor1.6}
For each $a\in \R$ such that $\bb_\gamma^{-1}(a)\ne\varnothing$, we have
\begin{gather*}
d(x,N^a_\gamma)=a-\bb_\gamma(x),\qquad \forall\, x\in \bb_\gamma^{-1}(-\infty,a].
\end{gather*}
Hence, $\bb_\gamma$ is differentiable at a point~$x$ if and only if
for each real number $a>\bb_\gamma(x)$ the distance function $d(\cdot,N^a_\gamma)$ is differentiable at~$x$.
\end{Corollary}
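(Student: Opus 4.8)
The plan is to derive the distance formula $d(x,N^a_\gamma)=a-\bb_\gamma(x)$ for $x\in\bb_\gamma^{-1}(-\infty,a]$ directly from Theorem~\ref{th1.4}, and then read off the differentiability statement as a consequence of Corollary~\ref{cor1.5} together with Theorem~\ref{thm:Busemann functions properties by Ohta}(3). First I would fix $x$ with $\bb_\gamma(x)\le a$ and produce an $N^a_\gamma$-segment from $x$: by Theorem~\ref{thm:Busemann functions properties by Ohta}(1) there is a co-ray $\sigma$ to $\gamma$ with $\sigma(0)=x$, and along it $\bb_\gamma(\sigma(s))=s+\bb_\gamma(x)$ by part~(2); hence at $s_0:=a-\bb_\gamma(x)\ge 0$ we have $\bb_\gamma(\sigma(s_0))=a$, so $\sigma|_{[0,s_0]}$ is a unit-speed geodesic satisfying~\eqref{eq1} with $b=a$, and by Theorem~\ref{th1.4} ($1\Rightarrow 3$, or directly the argument $2\Rightarrow 3$) it is an $N^a_\gamma$-segment. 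This immediately gives $d(x,N^a_\gamma)\le s_0=a-\bb_\gamma(x)$. For the reverse inequality I would use the Lipschitz estimate of Lemma~\ref{lem: Lispchitzness}: for any $y\in N^a_\gamma$ we have $\bb_\gamma(y)\ge a$, so $a-\bb_\gamma(x)\le \bb_\gamma(y)-\bb_\gamma(x)\le d(x,y)$, and taking the infimum over $y\in N^a_\gamma$ yields $a-\bb_\gamma(x)\le d(x,N^a_\gamma)$. (Here I should note that since $\bb_\gamma$ is continuous and $\bb_\gamma^{-1}(a)\ne\varnothing$, the formula for $x$ with $\bb_\gamma(x)=a$ is the trivial case $d(x,N^a_\gamma)=0$, consistent with $x\in N^a_\gamma$.) Combining the two inequalities proves the displayed equality.

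For the second assertion, suppose first that $\bb_\gamma$ is differentiable at $x$, and pick any $a>\bb_\gamma(x)$. On the sub-level set $\bb_\gamma^{-1}(-\infty,a]$, which is a neighbourhood of $x$ by continuity of $\bb_\gamma$, the first part gives $d(\,\cdot\,,N^a_\gamma)=a-\bb_\gamma(\,\cdot\,)$; differentiability of $\bb_\gamma$ at $x$ therefore forces $d(\,\cdot\,,N^a_\gamma)$ to be differentiable at $x$, with $\nabla_x\bigl(d(\,\cdot\,,N^a_\gamma)\bigr)=-\nabla\bb_\gamma(x)$ read off directly. Conversely, assume $d(\,\cdot\,,N^a_\gamma)$ is differentiable at $x$ for every real $a>\bb_\gamma(x)$. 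Fix one such $a$; then on the neighbourhood $\bb_\gamma^{-1}(-\infty,a]$ of $x$ the identity $\bb_\gamma=a-d(\,\cdot\,,N^a_\gamma)$ holds, so $\bb_\gamma$ is differentiable at $x$ as well. (One direction alone would already suffice for the "if and only if", but stating both makes the equivalence transparent.)

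The step I expect to require the most care is the production of the $N^a_\gamma$-segment in the first part: one must check that the geodesic $\sigma|_{[0,s_0]}$ coming from the co-ray is genuinely length-minimising to the \emph{set} $N^a_\gamma$, not merely to its boundary point $\sigma(s_0)$. This is exactly what the implication $2\Rightarrow 3$ of Theorem~\ref{th1.4} delivers — the argument there compares $\bb_\gamma(\sigma(s))$ against $\bb_\gamma(y)$ for an arbitrary $y\in N^a_\gamma$ via Lemma~\ref{lem: Lispchitzness}, and the same computation is what underlies the reverse inequality above — so the two halves of the proof are really the same Lipschitz estimate applied in opposite directions. Everything else is bookkeeping with the elementary properties of $\bb_\gamma$ and the fact (already recorded in the Introduction) that $N^a_\gamma=\bb_\gamma^{-1}[a,\infty)$ is closed and non-empty under the stated hypothesis, so that $N^a_\gamma$-segments exist.
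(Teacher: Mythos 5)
Your proposal is correct and follows essentially the same route as the paper: take a co-ray from $x$ (Theorem~\ref{thm:Busemann functions properties by Ohta}), note that its restriction to $[0,a-\bb_\gamma(x)]$ is an $N^a_\gamma$-segment by Theorem~\ref{th1.4}, and read off the distance formula, with the differentiability equivalence then following from the local identity $\bb_\gamma=a-d(\cdot,N^a_\gamma)$ (your separate Lipschitz argument for the lower bound is harmless but redundant, since the $N^a_\gamma$-segment property already gives equality by definition). The only small caveat is your parenthetical $\nabla_x\bigl(d(\cdot,N^a_\gamma)\bigr)=-\nabla\bb_\gamma(x)$, which need not hold literally for a non-reversible Finsler metric (the Finslerian gradient is not linear in the differential), but this does not affect the differentiability statement being proved.
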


\begin{proof}
Choose any $x\in \bb_\gamma^{-1}(-\infty,a]$, and
denote by $\sigma\colon [0,\infty)\to M$ a co-ray to $\gamma$ emanating from $x=\sigma(0)$.
Since $\sigma|_{[0,a-\bb_\gamma(x)]}$
is an $N^a_\gamma$-segment and noticing that $\sigma(a-\bb_\gamma(x))\in \bb_\gamma^{-1}(a)$,
 we obtain
$d(x,N^a_\gamma)=d(\sigma(0),\sigma(a-\bb_\gamma(x)))=a-\bb_\gamma(x)$.
\end{proof}

\begin{proof}[Proof of Theorem~\ref{th1.7}]
Let $x\in M$ be any co-point of $\gamma$. Choose $b>0$ such that $b>\bb_\gamma(x)$.

Then, from Theorem~\ref{th1.4} it follows that, for any co-ray $\sigma \colon [0,\infty)\to M$ of $\gamma$ from $x$,
we have
\begin{itemize}\itemsep=0pt
\item the relation $\bb_\gamma(\sigma(s))=s+\bb_\gamma(\sigma(0))=s+\bb_\gamma(x)$ holds good for any $s\geq 0$. Hence, for our chosen $b>0$, there always exists $a>0$ such that $b=\bb_\gamma(\sigma(a))=a+\bb_\gamma(x)$;
\item the geodesic segment $\sigma|_{[0,a]}$ is a maximal $N^b_\gamma$-segment.
\end{itemize}
 It results that $x\in \mathcal C_{N_\gamma^b}$.

Conversely, we choose any point $x\in \cup_{b>0}\mathcal{C}_{N_\gamma^b}$. It follows that $x$ is a cut point of ${N_\gamma^b}$, for some $b>0$.
 Let $\sigma\colon [0,a]\to M$ be an $N^b_\gamma$-segment from $x=\sigma(0)$, where $b=\bb_\gamma(\sigma(a))=a+\bb_\gamma(x)$. Theorem~\ref{th1.4} implies that there exists a maximal co-ray $\widetilde{\sigma}\colon [0,\infty)\to M$ of $\gamma$ such that $ \widetilde{\sigma}|_{[0,a]}={\sigma}|_{[0,a]}$. This means that $x=\widetilde{\sigma}(0)\in \mathcal{C}_\gamma$.

 We will prove now that ${\mathcal C}_{N_\gamma ^b}\subset {\mathcal C}_{N_\gamma ^c}$, for any $b<c$. Indeed, let us consider any point $x\in {\mathcal C}_{N_\gamma ^b}$, and let $\sigma\colon {[0,a]}\to M$ be an $N^b_\gamma$-segment emanating from $x=\sigma(0)$, i.e., $b=\bb_\gamma(\sigma(a))=a+\bb_\gamma(x)$.

 Notice that any short backward geodesic extension $\widetilde{\sigma}\colon {[-\varepsilon,a]}\to M$ of $\sigma$, where $\varepsilon>0$, cannot be an $N^b_\gamma$-segment, due to the fact that $x\in {\mathcal C}_{N_\gamma ^b}$.

 On the other hand, by Theorem~\ref{th1.4}, $\sigma|_{[0,a]}$ is a subarc of a maximal co-ray $\widetilde{\sigma}\colon {[0,\infty)}\to M$ of $\gamma$. Taking into account that $a+\bb_\gamma(x)=b<c$, that is, there exists $\widetilde{a}>0$ such that $a<\widetilde{a}:=c-\bb_\gamma(x)$, then again from Theorem~\ref{th1.4} it results that the subarc $\widetilde{\sigma}|_{[0,\widetilde{a}]}$ is a maximal $ {\mathcal C}_{N_\gamma ^c}$-segment, and hence $x\in {\mathcal C}_{N_\gamma ^c}$. In other words we have proved that ${\mathcal C}_{N_\gamma ^b}\subset {\mathcal C}_{N_\gamma ^c}$, for any $b<c$.

2. Follows easily from Theorem~A in~\cite{TS}, Corollary~\ref{cor1.6} and Theorem~\ref{th1.4}.
\end{proof}

\begin{Corollary} \label{cor1.9}
If $x\in M$ is an interior point of a co-ray $\sigma$ of
$\gamma$, then $\bb_\gamma$ is differentiable at $x$.
\end{Corollary}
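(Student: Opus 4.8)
The plan is to read the statement off Theorem~\ref{th1.7}(2), which says that $\bb_\gamma$ is differentiable at $x$ exactly when $x$ is the origin of a \emph{unique} co-ray to $\gamma$; so it suffices to prove existence and uniqueness of such a co-ray. Existence is immediate: writing $x=\sigma(t_0)$ with $t_0>0$ and setting $\hat\sigma(s):=\sigma(s+t_0)$, a unit-speed ray issuing from $x$, the relation \eqref{eq1} holds along $\sigma$ by Theorem~\ref{th1.4}, $1\Rightarrow 2$, whence $\bb_\gamma(\hat\sigma(s))=s+\bb_\gamma(\sigma(t_0))=s+\bb_\gamma(x)$ for all $s\ge 0$; Corollary~\ref{cor1.5} (applied on each compact subinterval) then shows $\hat\sigma$ is a co-ray to $\gamma$ from $x$.

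The real content is uniqueness, and here I would re-use the mechanism from the proof of Theorem~\ref{th1.4}, $3\Rightarrow 1$. Fix any $b>\bb_\gamma(x)$ and put $a:=b-\bb_\gamma(x)>0$. By Theorem~\ref{th1.4}, $1\Rightarrow 3$, applied to the subarc $\sigma|_{[0,\,t_0+a]}$ (note $\bb_\gamma(\sigma(t_0+a))=(t_0+a)+\bb_\gamma(\sigma(0))=a+\bb_\gamma(x)=b$), that subarc is an $N^b_\gamma$-segment, and $x=\sigma(t_0)$ is an \emph{interior} point of it since $0<t_0<t_0+a$. Now let $\tau\colon[0,\infty)\to M$ be any co-ray to $\gamma$ with $\tau(0)=x$. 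Applying Theorem~\ref{th1.4}, $1\Rightarrow 3$, to $\tau|_{[0,a]}$ (again $\bb_\gamma(\tau(a))=a+\bb_\gamma(x)=b$) makes $\tau|_{[0,a]}$ an $N^b_\gamma$-segment from $x$. But an interior point of an $N$-segment admits a unique $N$-segment — precisely the fact invoked in the proof of Theorem~\ref{th1.4}, $3\Rightarrow 1$ — so $\tau|_{[0,a]}$ must coincide with the tail segment $s\mapsto\sigma(t_0+s)$, $s\in[0,a]$. In particular $\dot\tau(0)=\dot\sigma(t_0)=\dot{\hat\sigma}(0)$, and since a geodesic is determined by its initial velocity, $\tau=\hat\sigma$. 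Thus $x$ has a unique co-ray and $\bb_\gamma$ is differentiable at $x$.

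I do not anticipate a genuine obstacle: the corollary is essentially a repackaging of the uniqueness of $N$-segments through interior points that already appears inside the proof of Theorem~\ref{th1.4}. The only point needing a line of care is the bookkeeping that identifies which subarc of $\sigma$ is the $N^b_\gamma$-segment through $x$ and verifies that $x$ lies \emph{strictly} inside it; this is routine from \eqref{eq1}. As an alternative route one could instead invoke Corollary~\ref{cor1.6}: the same bookkeeping shows that $x$ is an interior point of an $N^a_\gamma$-segment for every $a>\bb_\gamma(x)$, hence $d(\cdot,N^a_\gamma)$ is differentiable at $x$ for all such $a$, and so $\bb_\gamma$ is differentiable at $x$; this version, however, relies on the differentiability of distance functions at interior points of segments, a standard fact from~\cite{TS} that is not recalled in the present excerpt, so the argument through Theorem~\ref{th1.7}(2) is the cleaner one to record.
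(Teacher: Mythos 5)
Your proposal is correct and follows the same route as the paper: the paper's proof also reduces the corollary to Theorem~\ref{th1.7}(2) after noting that, by Theorem~\ref{th1.4}, the subray $\sigma|_{[t_0,\infty)}$ is the \emph{unique} co-ray emanating from $x=\sigma(t_0)$. Your argument merely unpacks that uniqueness explicitly via the $N^b_\gamma$-segment picture and the uniqueness of $N$-segments at interior points, which is exactly the mechanism the paper's terse citation of Theorem~\ref{th1.4} relies on.
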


\begin{proof}
Choose any point $\sigma(t_0)$, $t_0>0$.
By Theorem~\ref{th1.4},
the subray $\sigma|_{[t_0,\infty)}$ is a unique co-ray to $\gamma$ emanating from
$\sigma(t_0)$. Thus, Theorem~\ref{th1.7}, statement~2 shows that~$\bb_\gamma$ is dif\/ferentiable at~$\sigma(t_0)$.
\end{proof}

Let us denote by $\cC_\gamma$ the co-point set of the ray
$\gamma$, that is the origin points of the {\it maximal}
co-rays to $\gamma$.
\begin{Remark}
From the def\/inition of co-points it follows that if $p\in \mathcal C_\gamma$, then there exists a~maximal co-ray of $\gamma$ emanating from $p$. Equivalently, any co-ray emanating from $p\in \mathcal C_\gamma$ is maximal.
\end{Remark}

By Proposition 2.5 in \cite{TS} and our Theorem~\ref{th1.4} we obtain the following.

\begin{Corollary}\label{cor1.10}
Let $(M,F)$ be a forward complete boundaryless Finsler manifold, $\gamma$ a~forward ray in $M$ and $\cC_\gamma$ the co-point set of
$\gamma$.

Then, the subset
\begin{gather*}
\cC_\gamma^{(2)}:=\{p\in \cC_\gamma\colon
\textrm{there exist at least two maximal co-rays
from } p \textrm{ to } \gamma\}\subset \cC_\gamma
\end{gather*}
is dense in $\cC_\gamma$.
\end{Corollary}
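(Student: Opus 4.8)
The plan is to transfer the corresponding density statement for cut loci of closed sets — Proposition 2.5 in \cite{TS}, which asserts that the set of points of $\mathcal C_N$ admitting at least two $N$-segments is dense in $\mathcal C_N$ — across the dictionary set up by Theorems~\ref{th1.4} and~\ref{th1.7}. The key observation is that for a point $p\in\cC_\gamma$ and any $b>\bb_\gamma(p)$, the maximal co-rays from $p$ to $\gamma$ are in one-to-one correspondence (by restriction to $[0,a]$ with $a=b-\bb_\gamma(p)$) with the $N^b_\gamma$-segments emanating from $p$; this is exactly the content of Theorem~\ref{th1.4} together with the observation in Corollary~\ref{cor1.6} that $d(p,N^b_\gamma)=b-\bb_\gamma(p)$, so every $N^b_\gamma$-segment from $p$ has the same length $a$ and extends uniquely to a maximal co-ray. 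In particular, $p$ admits at least two maximal co-rays to $\gamma$ if and only if $p$ admits at least two $N^b_\gamma$-segments, i.e.\ $p\in\mathcal C_{N^b_\gamma}^{(2)}$ in the notation of \cite{TS}.

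Concretely, I would argue as follows. Fix an arbitrary $p\in\cC_\gamma$ and an arbitrary forward neighbourhood $U$ of $p$; we must produce a point of $\cC_\gamma^{(2)}$ in $U$. Choose any $b>\bb_\gamma(p)$. By Theorem~\ref{th1.7}(1), $p\in\mathcal C_{N^b_\gamma}$. By Proposition 2.5 of \cite{TS} applied to the closed set $N=N^b_\gamma$, the set of points of $\mathcal C_{N^b_\gamma}$ admitting at least two $N^b_\gamma$-segments is dense in $\mathcal C_{N^b_\gamma}$, so there is a point $q\in U\cap\mathcal C_{N^b_\gamma}$ admitting two distinct $N^b_\gamma$-segments. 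By the correspondence above — concretely by Theorem~\ref{th1.4}, $3\Rightarrow 1$, applied to each of the two segments, and using that distinct $N^b_\gamma$-segments have distinct initial velocities and hence extend to distinct co-rays — the point $q$ admits at least two distinct co-rays to $\gamma$; since $q\in\mathcal C_{N^b_\gamma}\subset\cC_\gamma$ by Theorem~\ref{th1.7}(1), every co-ray from $q$ is maximal (by the Remark preceding this corollary), so $q\in\cC_\gamma^{(2)}\cap U$. This proves density.

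The main point requiring care — and the only real obstacle — is making sure the two notions of "at least two segments" line up cleanly, i.e.\ that two distinct $N^b_\gamma$-segments from $q$ really do give two distinct maximal co-rays from $q$, and conversely. Forward: distinct $N^b_\gamma$-segments agree only at the endpoint if they shared an initial velocity (by uniqueness of geodesics), so they are distinct as co-rays after the extension in Theorem~\ref{th1.4}, $3\Rightarrow 1$. Conversely, two distinct maximal co-rays from $q$ restrict, by Theorem~\ref{th1.4}, $1\Rightarrow 3$ and Corollary~\ref{cor1.6}, to two $N^b_\gamma$-segments of the common length $b-\bb_\gamma(q)$; these are distinct because the co-rays are distinct geodesics from the same initial point and hence have distinct initial velocities. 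One should also note that $\mathcal C_{N^b_\gamma}$ is indeed a subset of $\cC_\gamma$ — this is immediate from Theorem~\ref{th1.7}(1), which identifies $\mathcal C_{N^b_\gamma}$ with $\cC_\gamma\cap\bb_\gamma^{-1}(-\infty,b)$. With these identifications in place the proof is a direct pullback of the \cite{TS} density result, and no further analysis is needed.
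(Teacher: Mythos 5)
Your proposal is correct and follows exactly the route the paper intends: the paper derives Corollary~\ref{cor1.10} directly from Proposition~2.5 of \cite{TS} (density of points with two $N$-segments in $\mathcal{C}_N$) together with Theorem~\ref{th1.4} (and implicitly Theorem~\ref{th1.7}(1)), giving no further details. You have simply spelled out the routine transfer --- the correspondence between $N^b_\gamma$-segments and maximal co-rays and the distinctness of initial velocities --- which is sound.
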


\begin{Remark}
Let ${\mathcal{ND}(\bb_\gamma)}\subset M$ be the set of non-dif\/ferentiable points of the Busemann function $\bb_\gamma$. Then,
from Corollaries~\ref{cor1.9} and~\ref{cor1.10} it follows
$\cC_\gamma^{(2)}={\mathcal{ND}(\bb_\gamma)}\subset \mathcal C_\gamma\subset \overline{\mathcal{ND}(\bb_\gamma)}$.

In the special case when $\cC_\gamma$ is closed set in $M$, we have ${\mathcal{ND}(\bb_\gamma)}\subset \mathcal C_\gamma= \overline{\mathcal{ND}(\bb_\gamma)}$. This is not true in general (see Remark~\ref{rm: not closed}).
\end{Remark}

In the two dimensional case, the structure theorems of the cut locus from \cite{TS} can be easily extendend to the structure of $\mathcal C_\gamma$.
We recall that an injective continuous map from the open interval $(0,1)$ (or closed interval $[0,1]$) of $\mathbb R$ and from a circle $\Sph^1$ into
 $M$ is called a {\it Jordan arc} and a {\it Jordan curve}, respectively.

A topological space $T$ is called a {\it tree} if
 any two points in $T$ can be joined by a unique Jordan arc in $T$.
Likewise, a topological space $C$ is called a {\it local tree} if for every point
 $x\in C$ and for any neighborhood $U$ of $x$,
 there exists a neighborhood $V\subset U$ of $x$ such that $V$
 is a tree.

 A continuous curve $c \colon [a,b]\to M$
is called {\it rectifiable} if its length
\begin{gather*}
l(c):=\sup\left\{\sum_{i=1}^{k}\;d(c(t_{i-1}),c(t_i)) \, | \, a=:t_0<t_1<\cdots<t_{k-1}<t_k:=b\right\}.
\end{gather*}
is f\/inite.

By Theorem~\ref{th1.7} and Theorem~B in~\cite{TS} we obtain (compare with~\cite{L})
\begin{Theorem}\label{th1.11}
Let $\gamma$ be a ray in a forward complete boundaryless $2$-dimensional Finsler mani\-fold~$(M,F)$.
Then, the of co-point set $\mathcal C_\gamma$
 of $\gamma$ satisfies the following three properties.
\begin{enumerate}\itemsep=0pt
\item[$1.$]
The set $\mathcal C_\gamma$ is a local tree and any two co-points on the same connected component of $\mathcal C_\gamma$ can be joined by a~rectifiable curve in $\mathcal C_\gamma$.
\item[$2.$]
The topology of $\mathcal C_\gamma$ induced from the intrinsic metric $\delta$ $($see definition below$)$
 coincides with the induced topology of $\mathcal C_\gamma$ from~$(M,F)$.
\item[$3.$]
The metric space $\mathcal C_\gamma$ with the intrinsic metric $\delta$ is forward complete.
\end{enumerate}
\end{Theorem}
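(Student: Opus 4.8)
The plan is to deduce all three assertions from the corresponding structural properties of the cut locus $\mathcal C_N$ of a closed set $N$ --- which are Theorem~B of~\cite{TS} --- by means of the identification supplied by Theorem~\ref{th1.7}. The observation that makes this work is that for every $b\in\R$ one has $\mathcal C_{N^b_\gamma}=\mathcal C_\gamma\cap\bb_\gamma^{-1}(-\infty,b)$, which is an \emph{open} subset of $\mathcal C_\gamma$ since $\bb_\gamma$ is continuous (Lemma~\ref{lem: Lispchitzness}), that these subsets increase with $b$, and that $\mathcal C_\gamma=\bigcup_{b>0}\mathcal C_{N^b_\gamma}$; hence every $x\in\mathcal C_\gamma$ has, for any $b>\bb_\gamma(x)$, the $\mathcal C_\gamma$-open neighbourhood $\mathcal C_{N^b_\gamma}$, on which the whole $N$-segment machinery of~\cite{TS} for the closed set $N^b_\gamma$ is available (and, since $\dim M=2$, so is Theorem~B of~\cite{TS}). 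Statement~1 now follows at once: being a local tree is a local property, so given $x\in\mathcal C_\gamma$ and a neighbourhood $U$, choosing $b>\bb_\gamma(x)$ and applying the local tree property of $\mathcal C_{N^b_\gamma}$ inside the open set $U\cap\mathcal C_{N^b_\gamma}$ produces a tree neighbourhood of $x$ contained in $U$. For the joinability, if $p,q$ lie in the same connected component of $\mathcal C_\gamma$, then --- a local tree being locally path connected --- a continuous curve $c\colon[0,1]\to\mathcal C_\gamma$ joins them; its image is compact, so $\bb_\gamma\circ c$ is bounded above by some $b$ and $c([0,1])$, being connected, lies in a single connected component of $\mathcal C_{N^b_\gamma}$, and Theorem~B of~\cite{TS} furnishes a rectifiable curve from $p$ to $q$ inside $\mathcal C_{N^b_\gamma}\subset\mathcal C_\gamma$.

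For Statement~2, $\delta$ is the intrinsic metric, $\delta(p,q)$ being the infimum of lengths of curves in $\mathcal C_\gamma$ from $p$ to $q$. Every such curve lies in $M$, so $d\le\delta$ and the identity $(\mathcal C_\gamma,\delta)\to(\mathcal C_\gamma,d|_{\mathcal C_\gamma})$ is $1$-Lipschitz; it remains to prove continuity of its inverse near a fixed $x\in\mathcal C_\gamma$. Choose $b>\bb_\gamma(x)$. On $\mathcal C_{N^b_\gamma}$, Theorem~B of~\cite{TS} asserts that the intrinsic metric $\delta_{N^b_\gamma}$, defined by infima of lengths of curves lying in $\mathcal C_{N^b_\gamma}$, induces the subspace topology; and since $\mathcal C_{N^b_\gamma}\subset\mathcal C_\gamma$ the curves admissible for $\delta_{N^b_\gamma}$ are a subfamily of those for $\delta$, so $\delta\le\delta_{N^b_\gamma}$ there. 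Hence, if $p\to x$ in the subspace topology, then $p$ eventually enters the $\mathcal C_\gamma$-open set $\mathcal C_{N^b_\gamma}$ and $\delta(x,p)\le\delta_{N^b_\gamma}(x,p)\to0$. Thus near $x$ the subspace topology refines the $\delta$-topology, and the two coincide.

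For Statement~3, let $\{p_n\}$ be forward Cauchy in $(\mathcal C_\gamma,\delta)$. Since $d\le\delta$, it is forward Cauchy in $(M,d)$ and hence converges, by forward completeness of $M$, to some $p_\infty\in M$. Fixing $k$ with $\delta(p_k,p_m)<1$ for all $m\ge k$ and using $\bb_\gamma(p_m)-\bb_\gamma(p_k)\le d(p_k,p_m)\le\delta(p_k,p_m)$ (Lemma~\ref{lem: Lispchitzness}), the tail $\{p_m\}_{m\ge k}$ --- and with it $p_\infty$ --- lies in $\bb_\gamma^{-1}(-\infty,b)$ for $b:=\bb_\gamma(p_k)+2$, so $\{p_m\}_{m\ge k}\subset\mathcal C_{N^b_\gamma}$ while $p_\infty\in M\setminus N^b_\gamma$. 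The basic fact that the cut locus of a closed set is relatively closed in its complement (part of the elementary $N$-segment theory of~\cite{TS}) then gives $p_\infty\in\mathcal C_{N^b_\gamma}\subset\mathcal C_\gamma$. Finally, $p_n\to p_\infty$ in $M$, hence in the subspace topology of $\mathcal C_\gamma$, hence --- since $p_\infty$ and eventually all $p_n$ lie in the open subset $\mathcal C_{N^b_\gamma}$, whose subspace topology is the $\delta_{N^b_\gamma}$-topology by Statement~2 --- also in $\delta_{N^b_\gamma}$, whence $\delta(p_n,p_\infty)\le\delta_{N^b_\gamma}(p_n,p_\infty)\to0$; so $(\mathcal C_\gamma,\delta)$ is forward complete. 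The step I expect to be the main obstacle is exactly this last appeal: establishing, in the Finsler setting, that the cut locus $\mathcal C_N$ of a closed set $N$ is relatively closed in $M\setminus N$ (equivalently, that a point of $M\setminus N$ whose $N$-segments are all non-maximal is not a limit of cut points). Everything else is bookkeeping with the open exhaustion $\mathcal C_\gamma=\bigcup_{b}\mathcal C_{N^b_\gamma}$ and the inequalities $d\le\delta\le\delta_{N^b_\gamma}$.
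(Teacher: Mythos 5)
Your handling of statements 1 and 2 is correct and is in substance the paper's own route: the paper obtains Theorem~\ref{th1.11} simply by combining Theorem~\ref{th1.7} with Theorem~B of~\cite{TS}, and your bookkeeping with the relatively open exhaustion $\mathcal C_\gamma=\bigcup_b\mathcal C_{N^b_\gamma}$ is exactly the localization the paper leaves implicit.

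Statement 3, however, has a genuine gap, and it is the step you yourself flagged. The ``basic fact'' that the cut locus of a closed set $N$ is relatively closed in $M\setminus N$ is false --- even for Riemannian surfaces --- and the paper tells you so: by Remark~\ref{rm: not closed}, Nasu~\cite{N1} produced a complete Riemannian surface and a ray $\gamma$ whose co-point set $\mathcal C_\gamma$ is not closed in $M$. If your lemma held, $\mathcal C_\gamma$ would always be closed: for co-points $x_j\to x$ choose $b>\bb_\gamma(x)$; by continuity of $\bb_\gamma$ the tail of $\{x_j\}$ lies in $\mathcal C_{N^b_\gamma}=\mathcal C_\gamma\cap\bb_\gamma^{-1}(-\infty,b)$ (Theorem~\ref{th1.7}) while $x\in M\setminus N^b_\gamma$, so relative closedness would force $x\in\mathcal C_{N^b_\gamma}\subset\mathcal C_\gamma$. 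Hence your identification of the ambient limit $p_\infty$ as a co-point collapses; no such statement can be part of the $N$-segment theory of~\cite{TS}. The repair is to avoid the ambient limit altogether and transfer the Cauchy condition to the intrinsic metric of a fixed sublevel: for tail indices, any curve $c$ in $\mathcal C_\gamma$ from $p_n$ to $p_m$ with $l(c)\le\delta(p_n,p_m)+\ve$ satisfies $\bb_\gamma(c(t))\le\bb_\gamma(p_n)+l(c)$ by Lemma~\ref{lem: Lispchitzness}, hence stays inside $\mathcal C_{N^b_\gamma}$ once $b$ exceeds the (bounded) tail values of $\bb_\gamma$ plus the Cauchy bound; therefore $\delta_{N^b_\gamma}(p_n,p_m)\le\delta(p_n,p_m)+\ve$, the tail is forward Cauchy for $\delta_{N^b_\gamma}$, and the forward completeness of $\bigl(\mathcal C_{N^b_\gamma},\delta_{N^b_\gamma}\bigr)$ asserted in Theorem~B of~\cite{TS} yields a limit point in $\mathcal C_{N^b_\gamma}\subset\mathcal C_\gamma$; since $\delta\le\delta_{N^b_\gamma}$ there, this point is also the $\delta$-limit. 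With that substitution your argument becomes a correct elaboration of the paper's one-line derivation.
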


Indeed, by the f\/irst statement, any two co-points $q_1,q_2\in \mathcal C_\gamma$ can be joined by a rectif\/iable arc in $\mathcal C_\gamma$
if $q_1$ and $q_2$ are in the same connected component.
Therefore, the {\it intrinsic metric}~$\delta$ on~$\mathcal C_\gamma$ def\/ined as
\begin{gather*}
\delta(q_1,q_2):=
\begin{cases}
\inf\{l(c)|\ c\ \textrm{is a rectif\/iable arc in }\mathcal C_\gamma\ \textrm{joining } q_1\ \textrm{and } q_2\},\\
\qquad\textrm{if $q_1,q_2\in \mathcal C_\gamma$ are in the same connected component,}\\
+\infty, \qquad \textrm{otherwise}
\end{cases}
\end{gather*}
is well def\/ined.

By Theorem~\ref{th1.7} and Theorem~C in~\cite{TS} we have
\begin{Theorem}\label{th1.12}
Let $\gamma$ be a ray in a forward complete boundaryless $2$-dimensional Finsler mani\-fold~$(M,F)$.
Then, there exists a set ${\cal E}\subset [0,\infty)$ of measure zero with the following properties:
\begin{enumerate}
\item[$1.$] For each
$t\in(0,\infty){\setminus}{\cal E}$, the set  $\bb_\gamma^{-1}(t)$ consists of locally finitely many mutually disjoint arcs.
In particular, if $\bb_\gamma^{-1}(a)$, is compact for some $a>t$, then $\bb_\gamma^{-1}(t)$ consists of finitely many mutually disjoint circles.
\item[$2.$] For each $t\in(0,\infty){\setminus} {\cal E}$, any point $q\in \bb_\gamma^{-1}(t)$ admits at most two maximal co-rays.
\end{enumerate}
\end{Theorem}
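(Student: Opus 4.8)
The plan is to transport Theorem~C of~\cite{TS}, which describes the metric spheres of a closed subset, to the level sets of~$\bb_\gamma$. The bridge is Corollary~\ref{cor1.6}. Fix $t>0$ and any $c>t$. Then $\bb_\gamma^{-1}(c)\neq\varnothing$ because $\bb_\gamma(\gamma(c))=c$, and a point at distance $c-t>0$ from $N^c_\gamma$ cannot belong to $N^c_\gamma$; hence Corollary~\ref{cor1.6} applies in both directions and gives the set identity
\[
\bb_\gamma^{-1}(t)=\big\{x\in M \ |\ d(x,N^c_\gamma)=c-t\big\},
\]
i.e.\ $\bb_\gamma^{-1}(t)$ is exactly the metric sphere of radius $c-t$ about the closed set $N^c_\gamma$.

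Next I would invoke Theorem~C of~\cite{TS} for the closed subset $N^c_\gamma$, for each rational $c>0$: it provides a set $\mathcal E_c\subset[0,\infty)$ of measure zero such that for every $r\in(0,\infty){\setminus}\mathcal E_c$ the metric sphere $\{d(\cdot,N^c_\gamma)=r\}$ is a disjoint union of locally finitely many arcs, each of whose points carries at most two $N^c_\gamma$-segments, with the refinement that this sphere is a finite disjoint union of circles whenever the boundary $\partial N^c_\gamma=\bb_\gamma^{-1}(c)$ is compact. Put $\mathcal E:=\bigcup_{c\in\mathbb Q,\,c>0}\{\,c-r \ |\ r\in\mathcal E_c,\ 0<r<c\,\}$; being a countable union of affine images of measure-zero sets, $\mathcal E$ has measure zero. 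Now fix $t\in(0,\infty){\setminus}\mathcal E$ and choose any rational $c>t$; then $c-t\notin\mathcal E_c$, so by the displayed identity the $N^c_\gamma$-form of Theorem~C of~\cite{TS} at radius $c-t$ is precisely statement~1 for $\bb_\gamma^{-1}(t)$, together with half of statement~2. For the remaining half, let $q\in\bb_\gamma^{-1}(t)$: by Theorem~\ref{th1.4} and the proof of Theorem~\ref{th1.7}, the initial subarc $\sigma|_{[0,c-t]}$ of any maximal co-ray $\sigma$ of $\gamma$ from $q$ is an $N^c_\gamma$-segment from $q$, and two distinct maximal co-rays give distinct such subarcs (coinciding subarcs would share the initial velocity at~$q$, hence be the same geodesic). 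Therefore the number of maximal co-rays from $q$ is at most the number of $N^c_\gamma$-segments from $q$, which is at most two.

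Finally, for the ``in particular'' clause: if $\bb_\gamma^{-1}(a)$ is compact for some $a>t$, then $\bb_\gamma^{-1}(t)$ is compact as well — I would establish this downward stability of compactness separately, or extract it from~\cite{TS} — and since for $t\notin\mathcal E$ the set $\bb_\gamma^{-1}(t)$ is a disjoint union of (open) arcs, hence a $1$-manifold without boundary, a compact such set is a finite disjoint union of circles; equivalently, one applies the compact-boundary refinement of Theorem~C of~\cite{TS} directly to $N^a_\gamma$, after enlarging $\mathcal E$ so that $a-t$ is non-exceptional for it. I expect the only genuine obstacle to be exactly this last point — reconciling the compactness hypothesis on level sets of $\bb_\gamma$ with the countable family of closed sets $N^c_\gamma$ to which Theorem~C of~\cite{TS} is applied, i.e.\ the descent of ``compact level set'' to lower levels. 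Everything else is a routine dictionary, through Corollary~\ref{cor1.6}, Theorem~\ref{th1.4} and Theorem~\ref{th1.7}, between metric spheres of the $N^c_\gamma$ and level sets of $\bb_\gamma$.
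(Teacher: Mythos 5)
Your proposal is correct and is essentially the paper's own argument: the paper proves Theorem~\ref{th1.12} in a single line, by combining Theorem~\ref{th1.7} (i.e., the dictionary of Corollary~\ref{cor1.6} and Theorem~\ref{th1.4} between level sets of $\bb_\gamma$ and distance spheres of the closed sets $N^c_\gamma$, and between maximal co-rays and $N^c_\gamma$-segments) with Theorem~C of~\cite{TS} --- exactly the transport you carry out, only without your explicit assembly of a single null set $\mathcal{E}$ from the rational levels. The ``obstacle'' you flag concerning the compactness clause is not treated in the paper either; it is simply inherited from the corresponding clause of Theorem~C in~\cite{TS}, so your construction and the injection of maximal co-rays into $N^c_\gamma$-segments are, if anything, more detailed than the original.
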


Here {\it locally finitely many} means that for $x\in \bb_\gamma(t)$, and any forward (or backward) ball $\mathcal B^+(x,r):=\{p\in M\colon d(x,p)<r\}$, the set $\mathcal B^+(x,r)\cap \bb_\gamma(t)$ contains only f\/initely many arcs.

\section[Implications of the differentiability of $\bb_\gamma$]{Implications of the dif\/ferentiability of $\boldsymbol{\bb_\gamma}$}\label{sec: diff of Bus}

Here are some results that follow from the previous section (compare with \cite{In1}).

 In~\cite{In1} it is proved for $G$-spaces that
if the co-point set ${\mathcal C}_\gamma\neq \varnothing$ is compact, then $\bb_\gamma$ is an exhaustion function.
We will give a more general result.

\begin{Theorem}\label{th2.2}
Let $(M,F)$ be a forward complete non-compact boundaryless Finsler manifold and $\gamma$ a ray in $M$.

 If for some given $c\geq \inf \bb_\gamma(M)$,
the set
 $\mathcal C_\gamma\cap \bb_\gamma^{-1}(-\infty,c]$ is compact and non-empty,
 then $\bb^{-1}_\gamma(-\infty,c]\neq \varnothing$ is compact.
\end{Theorem}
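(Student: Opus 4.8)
The plan is to show that $K := \bb_\gamma^{-1}(-\infty,c]$ is sequentially compact by analyzing, for an arbitrary sequence $\{x_k\}\subset K$, the co-rays emanating from the $x_k$ and projecting each $x_k$ onto the compact co-point set via its co-ray. First I would fix $c\ge \inf\bb_\gamma(M)$ so that $K\ne\varnothing$, and set $A := \mathcal C_\gamma\cap \bb_\gamma^{-1}(-\infty,c]$, which by hypothesis is compact and non-empty. The first key observation is that every co-ray $\sigma$ to $\gamma$ emanating from a point of $K$ must meet $A$: by Theorem~\ref{th1.4} (equivalently Theorem~\ref{thm:Busemann functions properties by Ohta}(2)) we have $\bb_\gamma(\sigma(s)) = s + \bb_\gamma(\sigma(0))$, so $\bb_\gamma$ is strictly increasing along $\sigma$; moreover $\sigma$ extends to a \emph{maximal} co-ray $\widetilde\sigma$, and the origin $\widetilde\sigma(0)$ of that maximal co-ray is by definition a co-point of $\gamma$. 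Since $\bb_\gamma(\widetilde\sigma(0)) \le \bb_\gamma(\sigma(0)) = \bb_\gamma(x) \le c$, that origin lies in $A$. Thus every $x\in K$ is reached by following (backward) a maximal co-ray from its origin point $p_x\in A$, and $d(p_x, x) = \bb_\gamma(x) - \bb_\gamma(p_x) \le c - \inf_{A}\bb_\gamma$, a bound uniform over $x\in K$ because $A$ is compact (hence $\bb_\gamma$, being continuous by Lemma~\ref{lem: Lispchitzness}, is bounded below on $A$).

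Next I would use this to produce convergence. Given $\{x_k\}\subset K$, pick for each $k$ a maximal co-ray $\widetilde\sigma_k$ with $p_k := \widetilde\sigma_k(0)\in A$ and $x_k = \widetilde\sigma_k(\ell_k)$ where $\ell_k = \bb_\gamma(x_k) - \bb_\gamma(p_k) \in [0, L]$ with $L := c - \inf_A\bb_\gamma < \infty$. By compactness of $A$, pass to a subsequence so that $p_k \to p_\infty \in A$; since the initial velocities $\dot{\widetilde\sigma}_k(0)$ live in the (compact, by forward completeness and local compactness of $M$) unit sphere bundle over a neighborhood of $p_\infty$, pass to a further subsequence so that $\dot{\widetilde\sigma}_k(0) \to v_\infty$ a unit vector at $p_\infty$; and pass to a further subsequence so that $\ell_k \to \ell_\infty \in [0,L]$. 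Then $x_k = \widetilde\sigma_k(\ell_k) \to \exp_{p_\infty}(\ell_\infty v_\infty) =: x_\infty$ by continuous dependence of geodesics on initial data (the geodesics are defined on all of $[0,\infty)$ by forward completeness). Finally $\bb_\gamma(x_\infty) = \lim \bb_\gamma(x_k) \le c$ by continuity of $\bb_\gamma$, so $x_\infty \in K$. This proves $K$ is compact.

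The main obstacle I anticipate is the compactness argument for the initial velocities $\dot{\widetilde\sigma}_k(0)$: one needs the ambient manifold near $p_\infty$ to be locally compact (so that the unit tangent sphere at points near $p_\infty$ varies in a compact set), which is fine for a finite-dimensional Finsler manifold but should be stated carefully; and one needs forward completeness to guarantee the limit geodesic $s\mapsto \exp_{p_\infty}(sv_\infty)$ is defined up to parameter $\ell_\infty$, which is immediate. A secondary subtlety is ensuring the chosen $p_k$ actually lie in $A$ rather than merely in $\mathcal C_\gamma$ — this is exactly where the inequality $\bb_\gamma(p_k) \le \bb_\gamma(x_k) \le c$ is used, together with the fact (from the Remark after Corollary~\ref{cor1.9}, or directly) that $\mathcal C_\gamma \subset \overline{\mathcal{ND}(\bb_\gamma)}$ is irrelevant here; what matters is simply that the origin of a maximal co-ray is a co-point, which is the definition. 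One should also note non-compactness of $M$ is used nowhere essentially in the compactness conclusion itself — it is part of the standing hypotheses because Busemann functions require a non-compact manifold — so the argument is purely local near $A$ plus the global structure of co-rays furnished by Theorems~\ref{th1.4} and~\ref{thm:Busemann functions properties by Ohta}.
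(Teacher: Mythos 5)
Your Step~2 (extracting convergent subsequences of origins, initial velocities and lengths, then using continuous dependence of geodesics and continuity of $\bb_\gamma$) is sound and is in substance the paper's own Step~2. The gap is in your ``first key observation'': you assert that every co-ray $\sigma$ emanating from a point of $K=\bb_\gamma^{-1}(-\infty,c]$ ``extends to a \emph{maximal} co-ray $\widetilde\sigma$, and the origin $\widetilde\sigma(0)$ of that maximal co-ray is by definition a co-point.'' That is not by definition, and it is not automatic: the backward extension of a co-ray may remain a co-ray for all negative parameters (so the extension process never terminates and no origin point exists), or, since only \emph{forward} completeness is assumed, the supremal co-ray extension may live on an open interval $(a,\infty)$ whose left endpoint is never attained by the geodesic. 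In Euclidean space, for instance, every co-ray to a straight ray is a parallel ray whose every backward extension is again a co-ray, so \emph{no} co-ray is a subray of a maximal one and $\mathcal C_\gamma=\varnothing$; this shows the existence of an origin co-point genuinely depends on the hypothesis that $\mathcal C_\gamma\cap\bb_\gamma^{-1}(-\infty,c]$ is non-empty and compact, and must be proved, not invoked.

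Establishing exactly this fact is the heart of the paper's proof (its Step~1): one introduces the set $\widetilde M$ of points lying on maximal co-ray extensions of the form $\sigma\colon(a,\infty)\to M$ with no origin, and the set $S_c\subset\bb_\gamma^{-1}(c)$ of points lying on co-rays emanating from co-points, and shows $S_c\neq\varnothing$, $S_c$ is open in $\bb_\gamma^{-1}(c)$, and $\widetilde M$ is both open and closed in $M$ (the compactness of $\mathcal C_\gamma\cap\bb_\gamma^{-1}(-\infty,c]$ enters in proving $\widetilde M$ open); connectedness of $M$ then forces $\widetilde M=\varnothing$, i.e., every point of $\bb_\gamma^{-1}(c)$ (hence of $K$, by following co-rays forward) lies on a maximal co-ray issuing from an actual co-point in $A$. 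Once that is in hand, your projection $x\mapsto p_x\in A$ with the uniform length bound $\bb_\gamma(x)-\bb_\gamma(p_x)\leq c-\inf_A\bb_\gamma$ and your compactness extraction go through. So the proposal is missing the main idea rather than a technicality; to repair it you would need to supply an argument of the open--closed type above (or an equivalent) before your sequential-compactness step.
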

\begin{proof}

For the number $c\geq \inf \bb_\gamma(M)$ given, we def\/ine the set
\begin{gather*}
S_c:=\big\{q\in \bb_\gamma^{-1}(c)\, |\, q \textrm{ belongs to some co-ray to }\gamma \textrm{ emanating from a point in }\mathcal C_\gamma\big\}.
\end{gather*}

For later use we also def\/ine
\begin{gather*}
\widetilde{M}:=\{ x\in M\, |\, \textrm{there is a maximal co-ray }  \sigma_x\colon (a,\infty)\to M, \textrm{ passing through } x, \\
 \hphantom{\widetilde{M}:=\{}{} \textrm{ for some } a\in [-\infty,\infty) \}.
\end{gather*}

Remark that if $F$ is bi-complete, then $a=-\infty$ always in the def\/inition of $\widetilde M$, but since we assume only forward completeness here, a f\/inite value for $a$ might happen.

We will divide the proof in two steps.

{\bf Step 1.}
Firstly, we prove that
\begin{gather*}
S_c=\bb_\gamma^{-1}(c).
\end{gather*}

In the case $c< \inf \bb_\gamma(M)$, we prove this by showing the followings
\begin{enumerate}\itemsep=0pt
\item[(i)] the set $S_c$ is non-empty,
\item[(ii)] $S_c$ is open in $\bb_\gamma^{-1}(c)$,
\item[(iii)] $\widetilde M$ is closed in $M$,
\item[(iv)] $\widetilde M$ is open in $M$.
\end{enumerate}

 {\it Proof of} (i).\ Firstly, we show that $S_c\neq \varnothing$. Indeed, taking into account the hypothesis, we can consider a point $p\in \mathcal C_\gamma\cap \bb_\gamma^{-1}(-\infty,c]$. If $\bb_\gamma(p)=c$ then $p\in S_c$ and there is nothing to prove.

 We can therefore assume $\bb_\gamma(p)<c$, that is there exists $l>0$ such that $\bb_\gamma(p)=c-l$.

 Since $p\in \mathcal C_\gamma$, we consider the maximal co-ray $\sigma\colon [0,\infty)\to M$ from $p$ to $\gamma$ and let $q$ be the point on $\sigma$ such that
 $d(p,q)=l$. Then $\bb_\gamma(\sigma(s))=s+\bb_\gamma(p)$ implies $\bb_\gamma(q)=d(p,q)+ \bb_\gamma(p)=d(p,q)+c-l=c$ and hence $q\in \bb_\gamma^{-1}(c)$ and $q\in \sigma$, that is $q\in S_c$. These show that $S_c$ is non-empty and (i) is proved.

{\it Proof of} (ii).\ Next, we prove by contradiction that $S_c$ is open. Indeed, assuming by contradiction
that for $q\in S_{c}$
 there is a points sequence $\{q_j\}\subset \bb_\gamma^{-1}(c){\setminus} S_c$ such that
 $q=\lim\limits_{j\to \infty}q_j$. We denote by $\sigma_j$ and $\sigma$ the co-rays passing through $q_j$ and $q$, respectively. Let $x$ be the initial point of $\sigma$, and by our assumption $x\in \mathcal C_\gamma\cap \bb_\gamma^{-1}(-\infty,c]$.

 Consider now a scalar $r>d(q,x)$ and the forward closed ball \mbox{$\overline{\mathcal B^+(q,r)}:=\{p\!\in\! M\, |\, d(q,p)\leq r\}$}. Obviously $\overline{\mathcal B^+(q,r)}$ is compact due to the forward complete hypothesis and the Hopf--Rinow theo\-rem, and $x\in \overline{\mathcal B^+(q,r)}$.

Let $\sigma_j$ denote a co-ray to $\gamma$ emanating from
$q_j=\sigma_j(0)$.
Since $\overline{{\mathcal B}^+(q,r)}$ is compact and $q_j\notin S_c$, we can extend backward $\sigma_j$ to some interval
$[s_j,0]$ with $d(q,\sigma_j(s_j))=r+\delta$.
Any limit geodesic of the sequence
$\{\sigma_j\}$ is a co-ray passing through $q$ which
contains $x$ as an interior point, that is a contradiction (see Fig.~\ref{S_i is open.}).

It follows that $S_c$ must be open set and (ii) is proved.

\begin{figure}[h]
\centering
\includegraphics{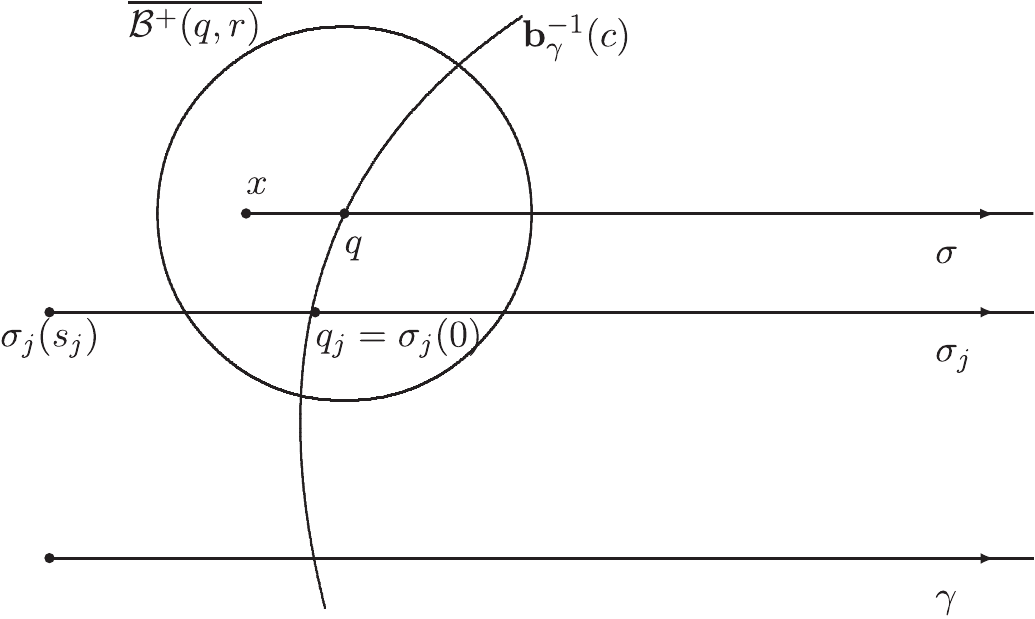}

\caption{$S_c$ is an open set.}\label{S_i is open.}
\end{figure}

{\it Proof of} (iii).\ Now we show that $\widetilde M$ is closed in $M$. Indeed let $\{x_j\}$ be a set of points in $\widetilde M$, such that $x_j\to x$ in $M$, and let $\sigma_j\colon (a_j,\infty)\to M$ be maximal co-rays to $\gamma$, parametrised such that $\sigma_j(0)=x_j$, with $a_j\in [-\infty,0)$. Obviously, such $\sigma_j$ exist from the def\/inition of $\widetilde M$.

By extracting some sub-sequence of $\sigma_j$ we can assume, without loosing the generality, that $\sigma_j |_{[0,\infty)}$ converges to some $\sigma_x |_{[0,\infty)}$.

We will show that $x\in \widetilde{M}$. As the sequence $\{x_j\}$ was arbitrary, this would imply $\widetilde{M}$ closed. So assume by contradiction
$x\in M{\setminus} \widetilde M$. This means that the domain of def\/inition of the maximal co-ray extension of $\sigma_x$ is the interval $[a,\infty)$, for some $a\in [-\infty,0)$. Therefore $\sigma_x(a)\in \mathcal C_\gamma$, and hence $q:=\sigma_x(1)\in S_c$, for $c:=1+\bb_\gamma(x)$, where we use Theorem~\ref{th1.4}.

On the other hand, for $j$ large enough, consider the points $q_j:=\sigma_j|_{[0,\infty)}\cap \bb_\gamma^{-1}(c)$, and observe that $q_j\in \bb_\gamma^{-1}(c){\setminus} S_c$, by def\/inition. But this contradicts the fact that $S_c$ is open, a fact proved already in (ii).

Thus, the (unique) maximal co-ray through $x$ must be of the form $\sigma_x\colon (a,\infty)\to M$, for some $a\in [-\infty,0)$, and therefore $x\in \widetilde M$. This shows that $
\widetilde M$ is closed and hence (iii) is proved.

{\it Proof of} (iv).\ Finally, we prove $\widetilde M$ is open set in $M$, or, equivalently, that $\widetilde M{\setminus} M$ is closed, in a similar manner.

Consider a sequence $\{x_j\}$ in $\widetilde M{\setminus} M$ with $x_j\to x$ in $M$, and consider the maximal co-rays
$\sigma_j\colon [a_j,\infty)\to M$, with $\sigma_j(0)=x_j$. Obviously this is the form of the maximal co-rays due to the choice of $x_j$ and def\/inition of $\widetilde M$.

Observe that the sequence of points $\{\sigma_j(a_j)\}\subset \mathcal C_\gamma\cap \bb_\gamma^{-1}(-\infty,c]$, for $c:=1+\bb_\gamma(1)$, and by the compactness hypothesis of $ \mathcal C_\gamma\cap \bb_\gamma^{-1}(-\infty,c]$ it follows that the limit point $\sigma_x(a)\in \mathcal C\cap \bb_\gamma^{-1}(-\infty,c]$. Thus
$x\in \widetilde M{\setminus} M$, and hence $\widetilde M$ is open, so (iv) is proved.

Using these we will build our argument as follows.

Reminding ourselves that a topological space $X$ is connected if and only if the only closed and open sets are the empty set and $X$, by taking $X=M$, and using claims (iii) and (iv) proved above, that is $\widetilde M$ is closed and open in $M$ it follows $\widetilde M=\varnothing$ or $\widetilde M=M$ (obviously $M$ is connected by hypothesis). However, since $S_c\neq \varnothing$ (claim (i) proved above), the maximal co-ray passing through any point $x\in S_c$ has the form $\sigma_x\colon [a,\infty)\to M$, so $\widetilde M=M$ cannot be possible, hence $\widetilde M=\varnothing$.

We obtain $S_c=\bb_\gamma^{-1}(c)$. Indeed, $S_c\subset\bb_\gamma^{-1}(c)$ by def\/inition. Conversely, for any $q\in \bb_\gamma^{-1}(c)$, it is now clear that there exists a maximal co-ray $\sigma_q\colon [a,\infty)\to M$ to $\gamma$ through $q$, hence $q\in S_c$ and
the claim $S=\bb_\gamma^{-1}(c)$ is proved.

We discuss now the case $c=\inf \bb_{\gamma}(M)$. Firstly, we observe that, for any $c\geq \inf \bb_{\gamma}(M)$, if $\sigma\colon (-\ve,\infty)\to M$ is a co-ray such that $\sigma(0)\in \bb_{\gamma}^{-1}(c)$, then $\bb_{\gamma}(\sigma(-\frac{\ve}{2}))<c$, and thus $\bb_{\gamma}^{-1}( \inf \bb_{\gamma}(M))\cap\widetilde{M}=\varnothing$.

In particular, if $c=\inf \bb_{\gamma}(M)$, and $\bb_{\gamma}^{-1}(-\infty,c]=\bb_{\gamma}^{-1}(c)$ is non-empty, then $S_{c}=\bb_{\gamma}^{-1}(c)$. The proof of this fact is similar to the proof of (i).

Observe that this immediately implies that $S_c\neq \varnothing$ for all $c\geq \inf \bb_{\gamma}(M)$.

In other words, what we have proved in Step 1 is that
{\it for any point $q\in \bb_\gamma^{-1}(c)$, there exists a maximal co-ray, i.e., a co-ray emanating from a point $x\in \mathcal C_\gamma$, passing through~$q$.}

{\bf Step 2.}
Using this we proceed to proving that $\bb_\gamma^{-1}(-\infty,c]$ is compact. We assume the converse, i.e., we assume there exists a divergent sequence
$\{x_j\}$ in $\bb_\gamma^{-1}(-\infty,c]$ in the sense that for any compact set $K\subset \bb_\gamma^{-1}(-\infty,c]$, there exists $N_K>0$ such that $x_j\notin K$ for any $j>N_K$.

 For each $j$ there exists a co-ray $\sigma_j$ from $x_j$ such that $\sigma_j\cap \bb_\gamma^{-1}(c)=\{y_j\}$ (this can be easily seen
 by a similar argument as in the proof of $S\neq \varnothing$). From Step 1 it follows that we can extend~$\sigma_j$ up to the point $z_j=\sigma_j(0)\in \mathcal C_\gamma$.

From hypothesis 2 of the theorem, there exists a subsequence $z_{j_k}$ of $z_j$ convergent to $z$ and hence there exists a sequence of co-rays $\sigma_{j_k}$ (emanating from each $z_{j_k}$) convergent to a co-ray $\sigma$ (emanating from the limit point $z$). For the sake of simplicity we assume $\lim\limits_{j\to \infty}z_j=z$. It follows that there exists a point $y\in \bb_\gamma^{-1}(c)$ such that $\lim\limits_{j\to \infty}y_j=y$.

Since $x_j$ is by construction an interior point of the
$N_\gamma^{c}$-segment $\sigma_j|_{[0,s_j]}$ that joins $z_j$ to $y_j$, it follows that there exists a point $x$ interior to the
$N_\gamma^{c}$-segment $\sigma|_{[0,s]}$ that joins $z$ to $y$.
But this implies that the sequence $\{x_j\}$ cannot be divergent in the sense above, that is we obtain a~contradiction. Therefore, $\bb_\gamma^{-1}(-\infty,c]$ must be compact.
 \end{proof}

\begin{Remark}\quad
\begin{enumerate}\itemsep=0pt
\item Observe that the conclusion of the theorem above cannot hold for $c< \inf \bb_{\gamma}(M)$ since, in this case, $\bb_\gamma^{-1}(-\infty,c]$ would be empty set.
 \item
Similarly with the proof of (ii) above, one can show that actually $S_c$ is also closed. We will not prove this property here because we don't need it.
\end{enumerate}
\end{Remark}

\begin{Corollary}\label{exhaust}
Let $(M,F)$ be a forward complete non-compact Finsler manifold and $\gamma$ a ray in~$M$.
 If there exists a numerical sequence $\{c_i\}$ with $\lim\limits_{i\to \infty}c_i=+\infty$, such that for each~$i$
 such that
 $\mathcal C_\gamma\cap \bb_\gamma^{-1}(-\infty,c_i]$ is compact and non-empty,
 then set $\bb^{-1}_\gamma(-\infty,c_i]$ is compact, i.e.,
the Busemann function $\bb_\gamma$ is an exhaustion function.
\end{Corollary}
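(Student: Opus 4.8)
The plan is to reduce this statement to Theorem~\ref{th2.2} applied along the sequence $\{c_i\}$, together with some elementary topological bookkeeping.

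First I would fix an index $i$ and check that the hypotheses of Theorem~\ref{th2.2} are met with $c:=c_i$. The assumption that $\mathcal C_\gamma\cap\bb_\gamma^{-1}(-\infty,c_i]$ is non-empty produces a point $p$ with $\bb_\gamma(p)\le c_i$, so $c_i\ge\inf\bb_\gamma(M)$; and $\mathcal C_\gamma\cap\bb_\gamma^{-1}(-\infty,c_i]$ is compact and non-empty by hypothesis. Hence Theorem~\ref{th2.2} applies and yields that $\bb_\gamma^{-1}(-\infty,c_i]$ is non-empty and compact, for every $i$.

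Next I would upgrade this from the particular values $c_i$ to arbitrary levels. Given any $c\in\R$, choose $i$ with $c_i\ge c$, which is possible since $c_i\to+\infty$. Then $\bb_\gamma^{-1}(-\infty,c]\subset\bb_\gamma^{-1}(-\infty,c_i]$, and the left-hand side is closed in $M$ because $\bb_\gamma$ is continuous (indeed locally Lipschitz, by Lemma~\ref{lem: Lispchitzness}). A closed subset of the compact set $\bb_\gamma^{-1}(-\infty,c_i]$ is compact, so $\bb_\gamma^{-1}(-\infty,c]$ is compact for every $c\in\R$. Finally, for any compact $K\subset\R$ pick $c$ with $K\subset(-\infty,c]$; then $\bb_\gamma^{-1}(K)$ is a closed subset of the compact set $\bb_\gamma^{-1}(-\infty,c]$, hence compact, which is precisely the assertion that $\bb_\gamma$ is an exhaustion function.

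There is no serious obstacle here: the substantive content is Theorem~\ref{th2.2}, and the only points requiring (minor) care are verifying $c_i\ge\inf\bb_\gamma(M)$ so that Theorem~\ref{th2.2} is legitimately invoked, and the monotonicity-plus-closedness argument that promotes compactness of the sublevel sets $\bb_\gamma^{-1}(-\infty,c_i]$ to compactness of every sublevel set, and thence of preimages of arbitrary compact subsets of $\R$.
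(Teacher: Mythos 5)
Your proposal is correct and is exactly the intended argument: the paper states Corollary~\ref{exhaust} without proof as an immediate consequence of Theorem~\ref{th2.2}, and your write-up simply makes that explicit (applying the theorem at each $c_i$, noting $c_i\geq\inf\bb_\gamma(M)$, and using that an arbitrary sublevel set, or the preimage of a compact set, is a closed subset of some compact $\bb_\gamma^{-1}(-\infty,c_i]$). No gaps; the bookkeeping you add is precisely what is needed to pass from the levels $c_i$ to the exhaustion property.
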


The following lemma shows that Innami's result in~\cite{In1} is a special case of
our Theorem~\ref{th2.2}.

\begin{Lemma}\label{lem2.3}
 Let $(M,F)$ be a bi-complete Finsler manifold and $\gamma$ a ray in $M$. If $\mathcal C_\gamma\neq \varnothing$ is compact, then for all sufficiently large $a\in \R$, the level set $\bb_\gamma^{-1}(a)$ is arcwise connected.
\end{Lemma}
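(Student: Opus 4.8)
The plan is to deduce the arcwise connectedness of the level sets $\bb_\gamma^{-1}(a)$ for large $a$ by combining the compactness of $\bb_\gamma^{-1}(-\infty,a]$ — which Theorem~\ref{th2.2} now grants us — with the known arcwise connectedness of the \emph{upper} level sets from Lemma~\ref{lem1.1}. First I would observe that, since $\mathcal C_\gamma\neq\varnothing$ is compact, in particular $\mathcal C_\gamma\cap \bb_\gamma^{-1}(-\infty,c]$ is compact and non-empty for every $c\geq \sup_{p\in\mathcal C_\gamma}\bb_\gamma(p)$ (and in fact for every $c\ge\inf\bb_\gamma(M)$, taking $c$ large enough that the intersection is non-empty). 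Hence Theorem~\ref{th2.2} applies and $\bb_\gamma^{-1}(-\infty,a]$ is compact for all sufficiently large $a$.

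Next I would fix such a large value $a$ and argue that $\bb_\gamma^{-1}(a)$ is arcwise connected. By Lemma~\ref{lem1.1}, the upper level set $U:=\bb_\gamma^{-1}(a,\infty)$ is arcwise connected (indeed $a\ge 0$ for $a$ large). Pick two points $p,q\in \bb_\gamma^{-1}(a)$. By Theorem~\ref{thm:Busemann functions properties by Ohta}(1) there is a co-ray $\sigma_p$ from $p$ and a co-ray $\sigma_q$ from $q$; by Theorem~\ref{th1.4}, $\bb_\gamma(\sigma_p(s))=a+s$ and $\bb_\gamma(\sigma_q(s))=a+s$, so for any $\varepsilon>0$ the points $\sigma_p(\varepsilon),\sigma_q(\varepsilon)$ lie in $U$ and can be joined by an arc $c_\varepsilon$ inside $U$. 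Concatenating $\sigma_p|_{[0,\varepsilon]}$, $c_\varepsilon$ and the reverse of $\sigma_q|_{[0,\varepsilon]}$ gives an arc in $\bb_\gamma^{-1}[a,\infty)$ joining $p$ to $q$. Thus $\bb_\gamma^{-1}[a,\infty)$ is arcwise connected; the remaining issue is to push the connecting arc down onto the level set $\bb_\gamma^{-1}(a)$ itself.

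To carry out that last step I would use the compactness of $\bb_\gamma^{-1}(-\infty,a]$ together with Corollary~\ref{cor1.6}: on $\bb_\gamma^{-1}(-\infty,a]$ one has $d(x,N^a_\gamma)=a-\bb_\gamma(x)$, so the $N^a_\gamma$-segments give a ``retraction'' of $\bb_\gamma^{-1}[a-\delta,\infty)\cap\{\bb_\gamma\le a'\}$ toward $\bb_\gamma^{-1}(a)$: moving each point along its co-ray until the Busemann value reaches $a$. Concretely, given the arc in $\bb_\gamma^{-1}[a,\infty)$ joining $p$ and $q$, I would show it can be chosen to stay in a compact region (using that $\bb_\gamma^{-1}(-\infty,a']$ is compact for $a'$ slightly larger than $a$, again by Theorem~\ref{th2.2}), and then deform it into $\bb_\gamma^{-1}(a)$. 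The continuity of this deformation is where the real work lies.

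The main obstacle I anticipate is exactly the continuity of that projection onto the level set: the map sending $x\mapsto\sigma_x(a-\bb_\gamma(x))$ along a co-ray $\sigma_x$ is only well defined modulo the choice of co-ray, and co-rays need not depend continuously on their initial point at co-points. The way around this is that, outside the co-point set, Theorem~\ref{th1.7}(2) makes $\bb_\gamma$ differentiable and the co-ray unique, hence continuous; so I would route the connecting arc through $M\setminus\mathcal C_\gamma$ as much as possible, using Corollary~\ref{cor1.10} (density of $\mathcal C_\gamma^{(2)}$ and local-tree structure in dimension considerations are not available here, so instead) a perturbation argument: an arbitrarily small perturbation of a generic arc misses the closed nowhere-dense-in-$\bb_\gamma^{-1}(a)$ obstruction set, keeping the deformation continuous. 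Assembling these pieces — compactness from Theorem~\ref{th2.2}, unique co-rays off $\mathcal C_\gamma$, and the level-set retraction from Corollary~\ref{cor1.6} — yields arcwise connectedness of $\bb_\gamma^{-1}(a)$ for all sufficiently large $a$.
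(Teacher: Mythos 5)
Your first two steps (joining $x,y\in\bb_\gamma^{-1}(a)$ through the upper level set via Lemma~\ref{lem1.1} and short co-ray pieces) match the paper, but the crucial third step --- pushing the connecting curve down onto $\bb_\gamma^{-1}(a)$ --- has a genuine gap, and you miss the one observation that makes it trivial. Since $\mathcal C_\gamma$ is compact and non-empty, one can simply choose $a>\max\bb_\gamma(\mathcal C_\gamma)$; then $\mathcal C_\gamma\cap\bb_\gamma^{-1}[a,\infty)=\varnothing$, so \emph{every} point of the connecting curve is an interior point of a co-ray, $\bb_\gamma$ is differentiable there (Corollary~\ref{cor1.9}, Theorem~\ref{th1.7}(2)), the co-ray through each such point is unique and depends continuously on the point, and sliding each point backwards along its co-ray (this is where bi-completeness enters, as the paper's remark notes) until the Busemann value reaches $a$ is a well-defined continuous deformation into $\bb_\gamma^{-1}(a)$. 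You never exploit that the compactness of $\mathcal C_\gamma$ bounds $\bb_\gamma$ on it, so you are left fighting a non-existent obstruction: your proposed fix --- ``route the arc through $M\setminus\mathcal C_\gamma$'' by a generic small perturbation --- is not justified by anything in the paper ($\mathcal C_\gamma$ need not even be closed, see Remark~\ref{rm: not closed}, and nothing proved controls its topological size inside a level set or inside $\bb_\gamma^{-1}[a,\infty)$), and it is also aimed at the wrong region, since the curve to be deformed lies \emph{above} level $a$, not in the level set.

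Two further points. First, your use of Corollary~\ref{cor1.6} and $N^a_\gamma$-segments is misplaced: that corollary describes the region $\bb_\gamma^{-1}(-\infty,a]$ below the level, whereas the deformation must move points of $\bb_\gamma^{-1}[a,\infty)$ \emph{backwards} along co-rays down to the level; these are different mechanisms, and only the latter (requiring backward completeness) is relevant. Second, the appeal to Theorem~\ref{th2.2} to get compactness of $\bb_\gamma^{-1}(-\infty,a]$ is unnecessary for this lemma --- the paper's proof uses no sublevel compactness at all --- so that part of your argument adds hypotheses-shuffling without contributing to the missing continuity step.
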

\begin{proof}
Since $\mathcal C_\gamma\neq \varnothing$ is compact we can choose a number $a>\max \bb_\gamma({\mathcal C}_\gamma)$.
Thus there does not exist a co-point of $\gamma $ in $\bb_\gamma^{-1}[a,\infty)$.
Choose any two points $x$ and $y$ in $\bb_\gamma^{-1}(a)$.
By Lemma~\ref{lem1.1},
there exists a continuous curve $c$ in $\bb_\gamma^{-1}[a,\infty)$ joining $x$ to $y$.
Since ${\mathcal C}_\gamma\cap \bb_\gamma^{-1}[a,\infty)=\varnothing$, we can get a curve in
$\bb_\gamma^{-1}(a)$ joining $x$ to $y$ by deforming the curve $c$ along the co-rays intersecting~$c$.
Therefore, the level set is arcwise connected.
\end{proof}

\begin{Remark}
Observe that the bi-completeness hypothesis is needed for deforming the curve $c$ above.
\end{Remark}

Moreover, we have

 \begin{Theorem}\label{MTF2}
 Let $(M,F)$ be a forward complete non-compact boundaryless Finsler manifold and $\gamma$ a ray in $M$.
 If $\mathcal C_\gamma$ contains an isolated point $p$, then
 \begin{enumerate}\itemsep=0pt
 \item[$1.$] The exponential map $\exp_p\colon T_pM\to M$ is a $C^1$-diffeomorphism and any geodesic emanating from $p$ is a maximal co-ray to $\gamma$.
 \item[$2.$] $\mathcal C_\gamma=\{p\}$ only.
 \item[$3.$] For any fixed point $q\in M$, the relation
 \begin{gather}
 d(p,q)+\bb_\gamma(p)=\bb_\gamma(q)
 \end{gather}
 holds good. In particular, for any
 $a>\bb_\gamma(p)$ the level sets $\bb^{-1}_\gamma(a)$ coincide with the forward spheres $\mathcal S^+(p,a-\bb_\gamma(p)):=\{q\in M\colon d(p,q)=a-\bb_\gamma(p)\}$.
 \end{enumerate}
 \end{Theorem}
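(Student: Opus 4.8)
The plan is to exploit the rigidity that an \emph{isolated} co-point forces, using Theorem~\ref{th1.4} and Theorem~\ref{th1.7} as the engine. First I would observe that since $p$ is isolated in $\mathcal C_\gamma$, there is $r>0$ with $\mathcal B^+(p,r)\cap \mathcal C_\gamma=\{p\}$, so by Corollary~\ref{cor1.9} the Busemann function $\bb_\gamma$ is differentiable at every point of $\mathcal B^+(p,r)\setminus\{p\}$, and by Theorem~\ref{th1.7}(2) each such point carries a \emph{unique} co-ray. Now take any unit vector $v\in T_pM$: by Theorem~\ref{thm:Busemann functions properties by Ohta}(1) applied at $p$ and by the definition of co-point, there is a maximal co-ray from $p$, but I need to show \emph{every} geodesic $t\mapsto\exp_p(tv)$ is a (maximal) co-ray. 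The key step is a continuity/limit argument: pick points $q_j\to p$ along $\exp_p(tv)$ with $q_j\neq p$; each $q_j$ has a unique co-ray $\sigma_j$; by the standard limit-of-co-rays argument (as used repeatedly in the paper, e.g.\ in the proof of Theorem~\ref{th2.2}) a subsequence of $\{\sigma_j\}$ converges to a co-ray from $p$, and one shows its initial direction must be $v$ because $q_j$ lies on the geodesic in direction $v$ and interior points of a co-ray determine it. This gives: for every $v$, the geodesic in direction $v$ from $p$ is a co-ray, hence (being from the co-point $p$) a maximal co-ray. That proves the second half of statement~1.

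Next, for statement~1's diffeomorphism claim and statement~2, I would argue that since every geodesic from $p$ is a \emph{globally minimising} ray, $p$ has empty cut locus in the ordinary sense, so $\exp_p$ is a diffeomorphism onto $M$ (injectivity: two geodesics from $p$ meeting at a point would give a non-minimising sub-arc on one of them, contradicting that both are rays; surjectivity and the $C^1$ regularity come from forward completeness via Hopf--Rinow together with the fact that the exponential of a forward complete Finsler manifold is $C^1$ and, in the absence of cut points, a homeomorphism, upgraded to a $C^1$-diffeomorphism off the zero section, with $C^1$ at the origin handled as in standard Finsler exponential regularity). For statement~2: if $p'\in\mathcal C_\gamma$ with $p'\neq p$, then the geodesic from $p$ through $p'$ is a maximal co-ray by statement~1, so $p'$ is an interior point of a co-ray, hence by Corollary~\ref{cor1.9} $\bb_\gamma$ is differentiable at $p'$ and $p'$ has a unique co-ray — but then $p'$, lying in the interior of that co-ray, cannot be the origin of a maximal co-ray, contradicting $p'\in\mathcal C_\gamma$; so $\mathcal C_\gamma=\{p\}$.

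For statement~3, fix $q\in M$. By statement~1 the minimising geodesic $\sigma$ from $p$ to $q$ is a co-ray to $\gamma$, so Theorem~\ref{th1.4}(2) (i.e.\ relation~\eqref{eq1}, equivalently Theorem~\ref{thm:Busemann functions properties by Ohta}(2)) gives $\bb_\gamma(\sigma(s))=s+\bb_\gamma(p)$ for all $s\ge 0$; evaluating at $s=d(p,q)$, and using that $\sigma$ is unit speed and minimising so $\sigma(d(p,q))=q$, yields $\bb_\gamma(q)=d(p,q)+\bb_\gamma(p)$. For the level-set statement, fix $a>\bb_\gamma(p)$: if $d(p,q)=a-\bb_\gamma(p)$ then the displayed relation gives $\bb_\gamma(q)=a$, so $\mathcal S^+(p,a-\bb_\gamma(p))\subset\bb_\gamma^{-1}(a)$; conversely if $\bb_\gamma(q)=a$ the same relation forces $d(p,q)=a-\bb_\gamma(p)$, giving the reverse inclusion. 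I expect the main obstacle to be the first step — rigorously producing, for \emph{every} direction $v$, a co-ray from $p$ in that exact direction, which requires the limit-of-co-rays compactness argument together with a careful use of "interior points of co-rays have a unique co-ray" (Theorem~\ref{th1.4}) to pin the limiting direction; the exponential-map regularity in statement~1 is the other delicate point, since in the Finsler setting $C^1$-ness of $\exp_p$ at the origin must be invoked rather than proved here.
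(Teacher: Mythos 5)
There is a genuine gap, and it sits exactly where you predicted the main obstacle would be: producing, for \emph{every} unit vector $v\in T_pM$, a co-ray from $p$ with initial direction $v$. Your limit argument takes $q_j\to p$ along $\exp_p(tv)$ and passes to a limit of the unique co-rays $\sigma_j$ emanating from $q_j$; this only yields \emph{some} co-ray from $p$, with initial direction $\lim_j\dot\sigma_j(0)$ (along a subsequence). Nothing in your argument relates $\dot\sigma_j(0)$ to $v$: the fact that $q_j$ lies on the radial geodesic in direction $v$ does not constrain the direction of the co-ray emanating from $q_j$, and ``interior points of a co-ray have a unique co-ray'' is of no help unless you already know that $q_j$ is an interior point of a co-ray issuing from $p$ --- which is precisely the statement to be proved, so the appeal is circular. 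A priori the gradient directions $\nabla\bb_\gamma(q_j)$ could all cluster around finitely many directions at $p$, and your limit would never reach a prescribed $v$.

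The paper closes this gap with a localization-and-backward-extension argument that your proposal does not contain. First (Claim~1 in the paper) one shows, by a limit-of-co-rays contradiction using that $p$ is a co-point, that there is $\varepsilon_1\in(0,\varepsilon_0)$ such that no co-ray emanating from a point outside $\mathcal B^+(p,\varepsilon_0)$ meets $\mathcal B^+(p,\varepsilon_1)$. Then (Claim~2), for $q\in\mathcal B^+(p,\varepsilon_1)$ one takes a co-ray $\sigma$ from $q$, extends it backward as a geodesic, and analyzes two cases (the backward extension stays inside $\overline{\mathcal B^+(p,\varepsilon_0)}$, where compactness forces loss of minimality, or it exits the ball, where Claim~1 forbids the extension from remaining a co-ray); in either case the maximal co-ray containing $\sigma$ originates at a point of $\mathcal C_\gamma\cap\mathcal B^+(p,\varepsilon_0)=\{p\}$. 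Hence the (unique, minimal) geodesic from $p$ through $q$ is a co-ray, and since $q$ ranges over a whole ball, every direction at $p$ occurs. Note also that in the merely forward complete setting one cannot simply ``extend the co-ray from $q_j$ backward until it hits a co-point'': the backward geodesic may fail to exist globally, which is exactly why the paper's case analysis inside a compact ball is needed. The remainder of your proposal (injectivity of $\exp_p$ via the classical two-minimal-geodesics shortening argument, statement~2 by interior-point differentiability, and statement~3 by evaluating relation~\eqref{eq1} at $s=d(p,q)$) is sound and close to the paper's reasoning, but it all rests on the unproved first step.
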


 \begin{proof}
 1. Since $p\in \mathcal{C}_\gamma$ is isolated in $\mathcal{C}_\gamma$, it follows by def\/inition that there exists $\varepsilon_0>0$ such that
 \begin{gather*}
 \mathcal{C}_\gamma\cap \mathcal B^+(p,\varepsilon_0)=\{p\},
 \end{gather*}
 where $\mathcal B^+(p,\varepsilon_0)$ is the forward ball in $(M,F)$.

 {\bf Claim 1.} {\it There exists $\varepsilon_1\in (0,\varepsilon_0)$ such that any co-ray to $\gamma$ emanating from a point of $M{\setminus} \mathcal B^+(p,\varepsilon_0)$ does not intersect $\mathcal B^+(p,\varepsilon_1)$.}

 Indeed, let us assume the contrary, that is, we shall assume that for each positive integer $j\in\{1,2,\dots\}$, there exists a co-ray
 $\sigma_j\colon [0,\infty) \to M$, emanating from a point $q_j=\sigma_j(0)\notin \mathcal B^+(p,\varepsilon_0)$, that intersects
 $\mathcal B^+(p,\frac{1}{j})$.

 Under this assumption, by extracting a subsequence
 of $\{\sigma_j\}$ we can construct a convergent sequence of co-rays with the properties in the assumption above. For simplicity, we denote this subsequence by $\{\sigma_j\}$ again. In this way, we obtain a limit co-ray $\sigma:=\lim\limits_{j\to \infty}\sigma_j$, and a convergent sequence of points
 $p_j\in \mathcal B^+(p,\frac{1}{j}) \cap \sigma_j|_{[0,\infty)}$, $\lim\limits_{j\to \infty}p_j=p$. It follows that there exists a co-ray $\sigma$ and $p$ is interior point of $\sigma$. This is a contradiction with $p\in \mathcal C_\gamma$ and Claim~1 is proved.

 {\bf Claim 2.} {\it For any point $q\in \mathcal B^+(p,\varepsilon_1)$, the geodesic emanating from $p$ and passing through the point $q$ is a co-ray of~$\gamma$. }

 Let $\sigma\colon [0,\infty)\to M$ be a co-ray to $\gamma$ emanating from $q=\sigma(0)$, and let $\widetilde\sigma\colon (a,\infty)\to M$ be the maximal geodesic extension of $\sigma$. One of the following situations happen.

 {\it Case 1. } $\widetilde\sigma|_{(a,0]}\subset \mathcal B^+(p,\varepsilon_0)$.

 In this case, since $\mathcal B^+(p,\varepsilon_0)$ is compact, there exists $b\in (a,0)$ such that $\widetilde\sigma|_{[b,0]}$ is not minimal. In particular, $\widetilde\sigma|_{[b,\infty)}$ is not a ray.

Thus, there must exist $b_1\in (b,0]$ such that $\widetilde\sigma|_{[b_1,\infty)}$ is a maximal co-ray to $\gamma$. It follows
$\widetilde\sigma(b_1)\in \mathcal C_\gamma\cap \mathcal B^+(p,\varepsilon_0)=\{p\} $ and hence $\widetilde\sigma|_{[b_1,\infty)}$ is a co-ray to $\gamma$ emanating from $p$ and passing through the point $q$.

 {\it Case 2. } There exists $b\in (a,0]$ such that $\widetilde\sigma(b) \notin \mathcal B^+(p,\varepsilon_0)$.

 In this case, let us denote $b_1:=\max\{t<0\colon d(p,\widetilde{\sigma}(t))=\varepsilon_0\}$. Since $\widetilde{\sigma}(0)=q\in \mathcal B^+(p,\varepsilon_1)$, it results that $\widetilde\sigma|_{[b_1,\infty)}$ is not a co-ray to $\gamma$ and $\widetilde{\sigma}|_{(b_1,0)}\subset \mathcal B^+(p,\varepsilon_0)$.

 Therefore, it must exist $b_2\in (b_1,0)$ such that $\widetilde{\sigma}|_{[b_2,\infty)}$ is a maximal co-ray to $\gamma$ passing through
 $\widetilde\sigma(b_2)\in \mathcal C_\gamma\cap \mathcal B^+(p,\varepsilon_0)=\{p\} $.

 From these it results that for any point $q\in \mathcal B^+(p,\varepsilon_1)$, there exists a co-ray to $\gamma$ emanating from $p$ and passing through the point $q$ and Claim 2 is proved.

 From Claims 1 and 2 we conclude that any geodesic from $p$ is a co-ray to~$\gamma$.

 It follows now from Theorem~\ref{th1.7} that any two distinct co-rays of $\gamma$ emanating from~$p$ do not intersect again. Indeed, it is trivial to see that since $\bb_\gamma$ is dif\/ferentiable at interior points of co-rays and the tangent direction of the co-ray at such a point is $\nabla\bb_\gamma$, it is not possible for co-rays to intersect each other at their interior points.

 In this way we obtain that $\exp_p\colon T_pM\to M$ is a $C^1$-dif\/feomorphism and f\/irst statement of the theorem is proved.

2. The fact that $C_\gamma=\{p\}$ it is now obvious from the proof of 1.

 3. Let us choose any point $q\in M{\setminus}\{p\}$, and let us denote by $\beta\colon [0,\infty )\to M$ any unit speed geodesic emanating from $p$ and passing through the point $q$. From the f\/irst statement of this theorem it follows that $\beta$ must be a co-ray to $\gamma$ and hence the relation
 $\bb_\gamma(\beta(s))=s+\bb_\gamma(p)$
 holds for any $s\geq 0$. In particular, since $q=\beta(d(p,q))$, it results $\bb_\gamma(q)=d(p,q)+\bb_\gamma(p)$.

 Moreover, from here it follows that for any $a>\bb_\gamma(p)$, we have $\bb_\gamma^{-1}(a)=\mathcal S^+(p,a-\bb_\gamma(a)) $, and the theorem is proved.
 \end{proof}

 \begin{Remark}
 If $(M,F)$ is a non-compact Finsler manifold whose all geodesics are strightlines, then $(M,F)$ is bi-complete and $\mathcal C_\gamma=\varnothing$.
 \end{Remark}

\begin{Remark}
It would be interesting to obtain some geometrical conditions (f\/lag curvature conditions) on the Finsler manifold $(M,F)$ such that all Busemann functions are everywhere dif\/ferentiable. Since this topic requires more elaboration, we leave it for a future research.
\end{Remark}

We recall that an {\it end} $\varepsilon$ of a non-compact manifold $X$ is an assignment to each compact set $K\subset X$ a component $\varepsilon(K)$ of $X{\setminus} K$ such that $\varepsilon(K_1)\supset\varepsilon(K_2)$ if $K_1\subset K_2$. Every non-compact manifold has at least one end. For instance, $\R^{n}$ has one end if $n>1$ and two ends if $n=1$.
By def\/inition one can see that a product $\R\times N$ has one end if $N$ is non-compact and two ends otherwise.

Here we prove
\begin{Corollary}\label{cor: topological decomp}
Let $(M,F)$ be a bi-complete non-compact Finsler manifold.
\begin{enumerate}\itemsep=0pt
\item[$1.$] If $\mathcal C_\gamma=\varnothing$, then $M$ is homeomorphic to $\R\times \bb_\gamma^{-1}(0)$.
\item[$2.$] If $M$ has at least three ends, then there are no differentiable Busemann functions on~$M$.
\end{enumerate}
\end{Corollary}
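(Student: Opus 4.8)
The plan is to derive both statements from the structural results already established, principally Theorem~\ref{MTF2}, Corollary~\ref{cor1.9}, and the characterisation of co-rays in Theorem~\ref{th1.4}. For statement~1, assume $\mathcal C_\gamma=\varnothing$. Since the manifold is bi-complete, every maximal co-ray to $\gamma$ is in fact an asymptotic straight line, defined on all of $\R$; indeed a maximal co-ray $\sigma\colon(a,\infty)\to M$ with $a>-\infty$ would have $\sigma(a)\in\mathcal C_\gamma=\varnothing$, a contradiction, so $a=-\infty$ for every maximal co-ray. By Theorem~\ref{thm:Busemann functions properties by Ohta}(1) every point lies on at least one co-ray, hence on a maximal one, hence on an asymptotic straight line; and by Corollary~\ref{cor1.9} every point of $M$ is an interior point of some co-ray, so $\bb_\gamma$ is differentiable \emph{everywhere}. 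Then Theorem~\ref{th1.7}(2) gives a unique co-ray through each $x$, with tangent $\nabla\bb_\gamma(x)$, so through each $x$ passes exactly one asymptotic straight line, and these fill $M$ and are pairwise disjoint (two co-rays meeting at an interior point would force $\nabla\bb_\gamma$ to take two values there). This yields a continuous flow: define $\Phi\colon \R\times M\to M$ by $\Phi(t,x):=\sigma_x(t)$, where $\sigma_x$ is the unique asymptotic straight line through $x$ with $\sigma_x(0)=x$; since $\bb_\gamma(\sigma_x(t))=t+\bb_\gamma(x)$ by Theorem~\ref{th1.4}, the flow moves $\bb_\gamma$ by $t$, and restricting to $\bb_\gamma^{-1}(0)$ gives a homeomorphism $\R\times\bb_\gamma^{-1}(0)\to M$, $(t,x)\mapsto\sigma_x(t)$, with continuous inverse $y\mapsto(\bb_\gamma(y),\sigma_y(-\bb_\gamma(y)))$. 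Continuity of $x\mapsto\dot\sigma_x(0)=\nabla\bb_\gamma(x)$ is the one point requiring care, and it follows from the $C^1$-smoothness of $\bb_\gamma$ at points of unique co-rays (or, at the level of continuity, from a standard limit-curve argument using the convergence definition of co-rays together with uniqueness of the limit).

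For statement~2, I argue by contraposition: suppose $\bb_\gamma$ is a differentiable Busemann function on $M$ for some ray $\gamma$. Then, as above, every point of $M$ lies on a unique asymptotic straight line, $\mathcal C_\gamma=\varnothing$, and by statement~1 we have $M\cong\R\times N$ with $N=\bb_\gamma^{-1}(0)$. Now count ends: if $N$ is compact then $\R\times N$ has exactly two ends; if $N$ is non-compact then $\R\times N$ has exactly one end (a product with a non-compact factor is connected at infinity in the strong sense that the complement of any compact set has a single unbounded component). In either case $M$ has at most two ends, contradicting the hypothesis that $M$ has at least three ends. Hence no differentiable Busemann function can exist on such an $M$.

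The main obstacle is the bookkeeping in statement~1: proving that the map $(t,x)\mapsto\sigma_x(t)$ is a genuine homeomorphism, which requires (a) continuity of $x\mapsto\dot\sigma_x(0)$ and hence joint continuity of the flow $\Phi$, (b) that distinct asymptotic straight lines are disjoint and their union is all of $M$, and (c) that the inverse map is continuous. Point~(b) is essentially Theorem~\ref{th1.7}(2) plus the interior-point argument from the proof of Theorem~\ref{MTF2}; point~(a) is the delicate one and rests on uniqueness of co-rays at points of differentiability combined with the closedness of the co-ray relation under $C^1$ limits of geodesics. The end-counting in statement~2 is then routine once the product structure is in hand, so the weight of the argument really lies in establishing the topological splitting cleanly.
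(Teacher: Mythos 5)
Your proof is correct and follows essentially the same route as the paper: flow along the unique asymptotic straight lines through each point to identify $M$ with $\R\times\bb_\gamma^{-1}(0)$ (your map $(t,x)\mapsto\sigma_x(t)$ is exactly the inverse of the paper's $\varphi(p)=(\bb_\gamma(p),h_1(p))$, with $h_1(p)=\sigma_p(-\bb_\gamma(p))$), and then obtain statement~2 by contraposition from the fact that a product $\R\times N$ has at most two ends. You merely supply more detail than the paper, which dispatches the continuity of the identification with a reference to the Lipschitz continuity of $\bb_\gamma$ and the end count with the remark preceding the corollary.
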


\begin{proof} 1. Since $\mathcal C_\gamma=\varnothing$, it follows that $\bb_\gamma$ is smooth everywhere and hence from each point there is a unique co-ray to $\gamma$. Thus, we can def\/ine the function
$\varphi\colon M\to \R\times \bb_\gamma^{-1}(0)$, $p\mapsto \varphi(\bb_\gamma(p),h_1(p))$,
where $h_1(p)$ is the intersection point of the co-ray from $p$ with the level set
$\bb_\gamma^{-1}(0)$. From the bi-completness hypothesis it follows that $h_1(p)$ always exists. We can easily see that this is a homeomorphism by using the Lipschitz continuity of~$\bb_\gamma$.

2. Due to statement 1 it follows that if $\bb_\gamma$ is dif\/ferentiable, then $M$ have at most two ends. Statement~2 follows by logical negation.
\end{proof}

\begin{Remark}\label{rm: not closed}
It is known that the cut locus of a point in a Riemannian or Finsler manifold~$M$ is a closed subset of~$M$ (see~\cite{BCS}).
On the other hand, we have shown in~\cite{TS}, by an example, that the cut locus of a closed subset in~$M$ is not closed in~$M$ anymore. A natural question is if the co-point set $\mathcal C_\gamma$ is closed or not.
First to answer to this question is Nasu who constructed in~\cite{N1} an example of Riemannian complete surface with a ray $\gamma$ whose co-point set $\mathcal C_\gamma$ is {\it not closed}.
Obviously, the same conclusion can be derived from our Theorem~\ref{th1.7} and~\cite{TS}.
\end{Remark}

\subsection*{Acknowledgements}

 I am grateful to Professor M.~Tanaka
 for bringing this topic into my attention and for many
 illuminating discussions. I am also deeply indebted to the anonymous referees for their constructive criticism and extremely useful suggestions that improved the manuscript enormously.
 Also I thank to N.~Boonnam for reading an early version of the paper.

\pdfbookmark[1]{References}{ref}
\LastPageEnding

\end{document}